\providecommand{\U}[1]{\protect\rule{.1in}{.1in}}
\newtheorem{theorem}{Theorem}
\newtheorem{corollary}[theorem]{Corollary}
\newtheorem{lemma}[theorem]{Lemma}
\newtheorem{proposition}[theorem]{Proposition}
\newtheorem{remark}[theorem]{Remark}
\newenvironment{proof}[1][Proof]{\noindent\textbf{#1.} }{\ \rule{0.5em}{0.5em}}
\begin{document}

\title{Pathwise uniqueness and continuous dependence for SDEs with nonregular drift}
\author{E. Fedrizzi$^{1}$, F. Flandoli$^{2}$\\ \small{(1) Laboratoire de Probabilit\'es et Mod\`eles Al\'eatoires, Universit\'{e} Paris VII, France}\\
\small{(2) Dipartimento di Matematica Applicata, Universit\`{a} di Pisa, Italia }}
\maketitle

\begin{abstract}
A new proof of a pathwise uniqueness result of Krylov and R\"{o}ckner is given. It concerns SDEs with drift having only certain integrability properties. In spite of the poor regularity of the drift, pathwise continuous dependence on initial conditions may be obtained, by means of this new proof. The proof is formulated in such a way to show that the only major tool is a good regularity theory for the heat equation forced by a function with the same regularity of the drift.

\end{abstract}

\section{Introduction}

Consider the stochastic differential equation in $\mathbb{R}^{d}$
\begin{equation}
X_{t}=x+\int_{0}^{t}b\left(  s,X_{s}\right)  \mathrm{d}s+W_{t},\qquad
t\in\left[  0,T\right]  \label{SDE}
\end{equation}
where $W$ is a $d$-dimensional Brownian motion on a filtered probability space
$\left(  \Omega,F_{t},P\right)  $, $x\in\mathbb{R}^{d}$ and $b:\left[
0,T\right]  \times\mathbb{R}^{d}\rightarrow\mathbb{R}^{d}$ is a measurable
vector field with components of class $L_{p}^{q}\left(  T\right)
:=L^{q}\left(  0,T;L^{p}\left(  \mathbb{R}^{d}\right)  \right)  $ for some
$p,q\in\left(  1,\infty\right)  $ satisfying the condition
\begin{equation}
\frac{d}{p}+\frac{2}{q}<1 \label{pq}
\end{equation}
(known in fluid dynamics, with $\leq$, as the Prodi-Serrin condition). A
measurable function $f:\left[  0,T\right]  \times\mathbb{R}^{d}\rightarrow
\mathbb{R}$ is in $L_{p}^{q}\left(  T\right)  $ if
\[
\Vert f\Vert_{L_{p}^{q}\left(  T\right)  }:=\left(  \int_{0}^{T}\left(
\int_{\mathbb{R}^{d}}\left\vert f(r,y)\right\vert ^{p}\,\mathrm{d}y\right)
^{q/p}\mathrm{d}r\right)  ^{1/q}<\infty.
\]
A remarkable result of Krylov and R\"{o}ckner \cite{KR05}, which elaborates
previous results of many authors, including Zwonkin \cite{Zv74}, Veretennikov
\cite{Ver}, Portenko \cite{Port}, states that this equation has a unique
strong solution, in the class of continuous processes such that
\begin{equation}
P\left(  \int_{0}^{T}\left\vert b\left(  s,X_{s}\right)  \right\vert
^{2}\mathrm{d}s<\infty\right)  =1. \label{condition on solutions}%
\end{equation}
They also remark that the solution depends continuously on $x$ in probability.
The result is extended in \cite{KR05} to local $L_{p}^{q}$-integrability
conditions plus growth conditions;\ and there are extensions to
state-dependent diffusion coefficients and other regularity assumptions, see
\cite{Zhang} and references therein.

The aim of this note is to give a new proof of the same result, based on a
different argument, essentially based only on regularity theory of the heat
equation with forcing equal to the drift or of the same class of regularity.
We hope this new proof will look more elementary. The new proof is somewhat
more quantitative (see in particular proposition \ref{proposition estimate})
and will allow us to show the $\alpha$-H\"{o}lder continuous dependence on
$x$, for every $\alpha<1$, \textit{pathwise}, in the spirit of stochastic
flows. This result is new and somewhat surprising, being $b$ so rough.
Precisely, we prove:

\begin{theorem}
\label{theorem}Equation (\ref{SDE}), with $b\in L_{p}^{q}\left(  T\right)  $
with $p,q\in\left(  1,\infty\right)  $ satisfying the condition (\ref{pq}),
for every $x\in\mathbb{R}^{d}$ has a unique strong solution $X_{t}^{x}$ such
that (\ref{condition on solutions}) holds true. The random field $\left\{
X_{t}^{x},t\in\left[  0,T\right]  ,x\in\mathbb{R}^{d}\right\}  $ has a
continuous modification, $\alpha$-H\"{o}lder continuous in $x$, for every
$\alpha<1$.
\end{theorem}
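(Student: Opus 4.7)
The plan is to carry out a Zvonkin--Veretennikov-type transformation whose construction relies only on parabolic regularity theory for the heat equation with $L^{q}_{p}$-data, then read off both uniqueness and the H\"older flow from the transformed equation. Fix $\lambda>0$ large and solve componentwise the backward Kolmogorov problem
\[
\partial_{t}U+\tfrac{1}{2}\Delta U+b\cdot\nabla U-\lambda U=-b,\qquad U(T,\cdot)=0,
\]
in the parabolic Sobolev space $W^{1,2}_{q,p}(0,T;\mathbb{R}^{d})$. Under condition (\ref{pq}), maximal $L^{q}_{p}$-regularity yields $U$ with $\|U\|_{\infty}$, $\|\nabla U\|_{\infty}$ and $\|\nabla^{2}U\|_{L^{q}_{p}(T)}$ finite; for $\lambda$ large one can force $\|U\|_{\infty}+\|\nabla U\|_{\infty}\leq 1/2$, so $\phi_{t}(x):=x+U(t,x)$ is a global bi-Lipschitz diffeomorphism of $\mathbb{R}^{d}$, uniformly in $t\in[0,T]$, with inverse also bi-Lipschitz.

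Next I would apply a generalised It\^{o} formula to $\phi_{t}(X_{t})$. Thanks to (\ref{condition on solutions}) and to the regularity of $U$, this is available (via a mollification argument and the Krylov estimate $\mathbb{E}\int_{0}^{T}|f(s,X_{s})|\,\mathrm{d}s\leq C\|f\|_{L^{q}_{p}(T)}$ valid under (\ref{pq})) and shows that $Y_{t}:=\phi_{t}(X_{t})$ satisfies
\[
\mathrm{d}Y_{t}=\lambda\, U(t,\phi_{t}^{-1}(Y_{t}))\,\mathrm{d}t+\bigl(I+\nabla U(t,\phi_{t}^{-1}(Y_{t}))\bigr)\mathrm{d}W_{t},
\]
an SDE with bounded continuous coefficients where the singular drift has been absorbed. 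Pathwise uniqueness for (\ref{SDE}) is equivalent to pathwise uniqueness for this transformed equation; the latter follows from a Gronwall/BDG argument in which the lack of pointwise Lipschitz continuity of $\nabla U$ is handled by invoking the Krylov estimate with $f=|\nabla^{2}U|$. Combined with weak existence (which for this class of $b$ is classical by Girsanov and Novikov-type bounds), this gives strong existence and pathwise uniqueness of $X^{x}$.

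The new content is the quantitative estimate (proposition \ref{proposition estimate}). For two starting points $x,y$, I would write the equation satisfied by $Y^{x}_{t}-Y^{y}_{t}$ and use BDG together with the Krylov estimate to control
\[
\mathbb{E}\int_{0}^{T}\bigl|\nabla U(s,X^{x}_{s})-\nabla U(s,X^{y}_{s})\bigr|^{2}\,\mathrm{d}s
\]
by $\|\nabla^{2}U\|_{L^{q}_{p}(T)}^{2}$ weighted by an expectation involving $|X^{x}-X^{y}|$. A stochastic Gronwall step then produces, for every integer $m\geq1$,
\[
\mathbb{E}\sup_{t\leq T}|X^{x}_{t}-X^{y}_{t}|^{2m}\leq C_{m}\,|x-y|^{2m}.
\]
Applying Kolmogorov's continuity criterion with $2m>d/\alpha$ and exploiting the continuity of $t\mapsto X^{x}_{t}$ one obtains a modification jointly continuous in $(t,x)$ and $\alpha$-H\"older in $x$ for every $\alpha<1$, which finishes the proof.

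The main obstacle is obtaining the full linear power $|x-y|^{2m}$ in the last estimate, rather than $|x-y|^{2m(1-\varepsilon)}$ or a worse exponent: since $\nabla U$ is only bounded with $L^{q}_{p}$-Sobolev derivatives, one cannot differentiate the coefficients classically and must couple the regularity of $U$ with a sharp Krylov bound for the $b$-solution. This is exactly where the \emph{strict} inequality in the Prodi--Serrin condition (\ref{pq}) enters and is, in my view, the heart of the new proof.
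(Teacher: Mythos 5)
Your route is the Zvonkin--Veretennikov transformation through the full backward Kolmogorov equation with damping, which the paper explicitly sets aside (see remark \ref{remark on equivalence}): the authors instead iterate the plain heat equation, replacing $b$ by $\mathcal{T}^{n+1}(b)=(b\cdot\nabla)U_{\mathcal{T}^{n}(b)}$ so that the irregular remainder has $L_{p}^{q}$-norm of order $2^{-n}$, precisely to avoid the preliminary well-posedness analysis of the Kolmogorov equation that your first step silently assumes. That analysis (a perturbative fixed point using the smallness of $\|\nabla U\|_{\infty}$) is doable, so this part is a difference of route rather than an error, and your transformed equation for $Y_{t}=\phi_{t}(X_{t})$ is the correct limit object.

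The genuine gap is in the uniqueness/continuity step. After the transformation the diffusion coefficient $I+\nabla U$ is only bounded with $D^{2}U\in L_{p}^{q}(T)$, so the BDG term produces
\[
E\int_{0}^{T}\bigl\|\nabla U(s,X_{s}^{x})-\nabla U(s,X_{s}^{y})\bigr\|^{2}\,\mathrm{d}s
\le E\int_{0}^{T} g_{s}\,\bigl|X_{s}^{x}-X_{s}^{y}\bigr|^{2}\,\mathrm{d}s,
\qquad
g_{s}:=\frac{\|\nabla U(s,X_{s}^{x})-\nabla U(s,X_{s}^{y})\|^{2}}{|X_{s}^{x}-X_{s}^{y}|^{2}},
\]
and any stochastic Gronwall argument that closes this with the full power $|x-y|^{2m}$ requires \emph{exponential} integrability of $\int_{0}^{T}g_{s}\,\mathrm{d}s$, uniformly in the data. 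The Krylov estimate $E\int_{0}^{T}|f(s,X_{s})|\,\mathrm{d}s\le C\|f\|_{L_{p}^{q}(T)}$ that you invoke gives only a first moment and cannot deliver this. The paper's lemma \ref{lemma 4} is devoted exactly to this point: one bounds $g_{s}$ by $\int_{0}^{1}\|\nabla^{2}U(s,rX_{s}^{x}+(1-r)X_{s}^{y})\|^{2}\,\mathrm{d}r$, observes that the interpolated process $X^{(r)}=rX^{x}+(1-r)X^{y}$ is itself an It\^{o} process with drift whose squared modulus has exponential moments, applies Girsanov to reduce to a Brownian motion, and then uses Khas'minskii's lemma (corollary \ref{corollary appendix B}), which is where the strict inequality in (\ref{pq}) is consumed. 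Equivalently the paper premultiplies by $e^{-C_{p}^{\ast}A_{t}}$ with $A_{t}=\int_{0}^{t}g_{s}\,\mathrm{d}s$ and removes the weight at the end via Cauchy--Schwarz and the same exponential bound. Without this ingredient your ``stochastic Gronwall step'' does not close, so you should either adopt the $e^{-A_{t}}$ device plus the interpolation--Girsanov--Khas'minskii estimate, or supply an equivalent exponential moment bound (e.g.\ via maximal functions of $D^{2}U$ along the solution), before the claimed estimate $E\sup_{t\le T}|X_{t}^{x}-X_{t}^{y}|^{2m}\le C_{m}|x-y|^{2m}$ and the Kolmogorov-criterion conclusion are justified.
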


As we said, the aim of this note is to show a new simple argument to deal with
SDEs with nonregular drift. In this spirit, we prefer to keep the exposition as
simple as possible and thus we limit ourselves to the two claims of the
theorem (uniqueness and pathwise H\"{o}lder continuity in the initial
conditions). However, with longer arguments, we have also checked that an
$\alpha$-H\"{o}lder continuous stochastic flow exists; and moreover the
solution is differentiable in $x$ in an average sense, but not pathwise (we
cannot get a differentiable stochastic flow). These results, mostly included
in \cite{Fedrizzi}, will be published elsewhere. Moreover, we do not stress
the generality beyond the (already challenging) class $L_{p}^{q}\left(
T\right)  $, but it is clear that one can accept some form of local
integrability plus suitable control of the growth, at the expenses of several
more details. And presumably the extension to other regularity classes
different from $L_{p}^{q}\left(  T\right)  $ is possible, preserving at least
the basic property that $\nabla u$ is bounded (see below).

It will be clear from the proof below that a sort of principle emerges.
\textit{If we have a good theory for the heat equation}%
\begin{equation}
\frac{\partial u}{\partial t}+\frac{1}{2}\Delta u=\varphi\text{ on }\left[
0,T\right]  ,\qquad u|_{t=T}=0 \label{heat eq}%
\end{equation}
\textit{when }$\varphi$\textit{ has the same regularity as the drift }$b$,
\textit{then we have the main tools to prove strong uniqueness} and possibly
stochastic flows of H\"{o}lder maps. The good theory must include (at our
present level of understanding) a \textit{uniform bound on the gradient}
$\nabla u$. This is the main reason for the assumption $b\in L_{p}^{q}\left(
T\right)  $ with $p,q\in\left(  1,\infty\right)  $ satisfying condition
(\ref{pq}). Other properties of $u$, of course, are used below but they look
more flexible, not optimized. It seems that this principle extends to infinite
dimensional situations (replacing the heat semigroup by the Ornstein-Uhlenbeck
one) and other finite dimensional cases beyond the one treated here.

Of course, this principle is just a reformulation of a known fact, because the
non-trivial results on Kolmogorov type equations needed in other proofs of
pathwise uniqueness (like those in the references mentioned above, see also
\cite{FGP}, \cite{DaPratoFlandoli}), are ultimately based on a perturbative
analysis of the heat equation, in spaces with regularity related to the one of
the drift. See also remark \ref{remark on equivalence}. But the presentation
here is very direct and easily prompt to generalizations.

The proof, indeed, becomes slightly shorter if we use a good regularity theory
for the backward Kolmogorov equation%
\[
\frac{\partial U}{\partial t}+\frac{1}{2}\Delta U+\left(  b\cdot\nabla\right)
U=-b\text{ on }\left[  0,T\right]  ,\qquad U_{\Phi}|_{t=T}=0.
\]
This is the approach developed in \cite{FGP} for H\"{o}lder continuous drift
(see also the infinite dimensional generalization \cite{DaPratoFlandoli}), and
in \cite{Fedrizzi} for $L_{p}^{q}$-drift. The proof is shorter (and to some
extent more far reaching, if one wants to prove further properties like
differentiability in $x$), but at the price of a careful preliminary analysis
of the Kolmogorov equation. Even if ultimately the two approaches are
equivalent, we think it is conceptually interesting to realize that only heat
equation estimates, with forcing of the same type as the drift, are needed.
For this reason we give a self-contained proof based only on (\ref{heat eq}).

\section{First step of the proof}

First, let us clarify that we prove only the strong uniqueness and pathwise
dependence part of the theorem. Indeed, we give for granted the weak existence
proved in previous works by means of Girsanov theorem (see \cite{KR05} and
proposition \ref{teo weak existence} in the appendix) and thus the strong
existence follows from weak existence and strong uniqueness by the classical
Yamada-Watanabe theorem, or by the construction given by Gyongy and Krylov
\cite{Gyongy-Krylov}.

Consider the backward heat equation (\ref{heat eq}) with $\varphi\in L_{p}%
^{q}\left(  T\right)  $. Denote by $H_{2,p}^{q}\left(  T\right)  $ the space%
\[
H_{2,p}^{q}\left(  T\right)  :=L^{q}\left(  0,T;W^{2,p}\left(  \mathbb{R}%
^{d}\right)  \right)  \cap W^{1,q}\left(  0,T;L^{p}\left(  \mathbb{R}%
^{d}\right)  \right)
\]
with norm $\left\Vert .\right\Vert _{H_{2,p}^{q}\left(  T\right)  }$ given by
the sum of the natural norms of $L^{q}\left(  0,T;W^{2,p}\left(
\mathbb{R}^{d}\right)  \right)  $ and $W^{1,q}\left(  0,T;L^{p}\left(
\mathbb{R}^{d}\right)  \right)  $. Denote by $\left\Vert .\right\Vert
_{L^{\infty}\left(  T\right)  }$ the norm in the space $L^{\infty}\left(
\left[  0,T\right]  \times\mathbb{R}^{d}\right)  $. All our analysis will be
based only on the following classical result (see Krylov \cite{Kr01a} and
\cite[ lemma 10.2]{KR05}). More is known (uniqueness, H\"{o}lder continuity of
$\nabla u$), but we insist that we use only the following properties.

\begin{theorem}
For every $\varphi\in L_{p}^{q}\left(  T\right)  $, the backward heat equation
(\ref{heat eq}) has at least a solution $u\in H_{2,p}^{q}\left(  T\right)  $,
with%
\begin{equation}
\left\Vert D^{2}u\right\Vert _{L_{p}^{q}\left(  T\right)  }\leq C\left\Vert
\varphi\right\Vert _{L_{p}^{q}\left(  T\right)  }. \label{maximal}%
\end{equation}
Moreover, $\nabla u\in L^{\infty}\left(  \left[  0,T\right]  \times
\mathbb{R}^{d}\right)  $ and%
\begin{equation}
\left\Vert \nabla u\right\Vert _{L^{\infty}\left(  T\right)  }\leq C\left(
T\right)  \left\Vert \varphi\right\Vert _{L_{p}^{q}\left(  T\right)  }
\label{bounded gradient}%
\end{equation}
with%
\begin{equation}
\lim_{T\rightarrow0}C\left(  T\right)  =0. \label{small C}%
\end{equation}

\end{theorem}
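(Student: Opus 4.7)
My strategy is to write $u$ explicitly via the heat semigroup and derive each estimate separately: (\ref{maximal}) is the classical parabolic $L^q_t L^p_x$ maximal regularity, while (\ref{bounded gradient}) is a direct Gaussian kernel computation in which the Prodi-Serrin exponent (\ref{pq}) emerges as the critical scaling. Time-reversing $v(s,x) := u(T-s,x)$ turns (\ref{heat eq}) into a forward heat equation with zero initial datum and forcing $-\varphi(T-s,\cdot)$, so Duhamel gives
\begin{equation*}
u(t,x) = -\int_t^T (P_{r-t}\,\varphi(r,\cdot))(x)\,dr, \qquad P_s := e^{s\Delta/2}.
\end{equation*}
I would work with this representation throughout. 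Existence in $H_{2,p}^q(T)$ and the verification of (\ref{heat eq}) follow by approximating $\varphi$ with smooth compactly supported $\varphi_n$, applying (\ref{maximal}) and the identity $\partial_t u = \varphi - \tfrac{1}{2}\Delta u$ to bound the time derivative, and extracting a weak limit.

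For (\ref{maximal}), the operator $\varphi \mapsto D^2 u$ has on the Fourier side the symbol $\xi_i\xi_j \int_t^T e^{-|\xi|^2(r-t)/2}(\cdot)\,dr$. The spatial $L^p$ boundedness is handled by the Mikhlin multiplier theorem, and the temporal $L^q$ boundedness by a vector-valued Calder\'on-Zygmund argument; alternatively, one invokes the abstract $L^p$-maximal regularity of analytic semigroups on the UMD space $L^p(\mathbb{R}^d)$. This step uses neither (\ref{pq}) nor any smallness of $T$, and is the only genuinely nontrivial analytic ingredient.

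The $L^\infty$ gradient bound is then elementary. From the explicit Gaussian $p_s(x) = (2\pi s)^{-d/2} e^{-|x|^2/(2s)}$ a change of variables gives
\begin{equation*}
\|\nabla p_s\|_{L^{p'}(\mathbb{R}^d)} \leq C\, s^{-1/2 - d/(2p)}, \qquad 1/p + 1/p' = 1,
\end{equation*}
so H\"older's inequality yields $\|\nabla P_s f\|_{L^\infty} \leq C\, s^{-1/2 - d/(2p)} \|f\|_{L^p}$. Differentiating under the integral in the formula for $u$ and applying H\"older in the $r$ variable with exponent $q$ produces
\begin{equation*}
\|\nabla u\|_{L^\infty(T)} \leq C \left(\int_0^T s^{-(1/2 + d/(2p))q'}\,ds\right)^{1/q'} \|\varphi\|_{L_p^q(T)}.
\end{equation*}
The time integrand is locally integrable precisely when $(1/2 + d/(2p))q' < 1$, and this inequality rearranges exactly to $d/p + 2/q < 1$, which is (\ref{pq}). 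Under (\ref{pq}) the integral equals $C\, T^{1/2 - 1/q - d/(2p)}$ with strictly positive exponent, yielding both (\ref{bounded gradient}) and (\ref{small C}).

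The only true obstacle is the singular-integral estimate (\ref{maximal}); the gradient bound, by contrast, becomes a textbook calculation once the critical scaling is recognized, which is also the conceptual reason why the Prodi-Serrin condition appears at exactly this place in the theory.
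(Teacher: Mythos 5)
You should first note that the paper does not actually prove this theorem: it is quoted as a classical black box, with the maximal regularity estimate (\ref{maximal}) attributed to Krylov \cite{Kr01a} and the statement as a whole to \cite[Lemma 10.2]{KR05}, so there is no in-paper argument to compare against. Your outline is nevertheless the standard and correct route. The Duhamel representation $u(t,x)=-\int_t^T (P_{r-t}\varphi(r,\cdot))(x)\,\mathrm{d}r$ is right, and your gradient computation is accurate: $\Vert\nabla p_s\Vert_{L^{p'}}\le Cs^{-1/2-d/(2p)}$, H\"older in $r$ with exponent $q'$, and the integrability condition $(1/2+d/(2p))q'<1$ rearranging exactly to $d/p+2/q<1$, with the resulting constant $C(T)=CT^{1/2-1/q-d/(2p)}$ vanishing as $T\to0$ --- this is precisely the mechanism behind (\ref{bounded gradient}) and (\ref{small C}), and it correctly isolates why the strict Prodi--Serrin inequality is the hypothesis. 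For (\ref{maximal}) you only name the machinery (Mikhlin plus vector-valued Calder\'on--Zygmund, or UMD maximal regularity of analytic semigroups); that is legitimate since this mixed-norm parabolic estimate is exactly the content of the cited Krylov paper and is not reproducible in a few lines, but be aware it is the one step you have not actually proved. Two small points to tidy up if you were to write this in full: membership of $u$ in $H_{2,p}^q(T)$ also requires $u,\nabla u\in L^q(0,T;L^p)$ and $\partial_t u\in L^q(0,T;L^p)$, which follow routinely from $\Vert P_s f\Vert_{L^p}\le\Vert f\Vert_{L^p}$, $\Vert\nabla P_s f\Vert_{L^p}\le Cs^{-1/2}\Vert f\Vert_{L^p}$ and the identity $\partial_t u=\varphi-\tfrac12\Delta u$, as you indicate; and the approximation/weak-limit step should be stated for the whole $H_{2,p}^q(T)$ norm, not only for $D^2u$.
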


Given a vector field $\Phi:\left[  0,T\right]  \times\mathbb{R}^{d}%
\rightarrow\mathbb{R}^{d}$, we still write $\Phi\in L_{p}^{q}\left(  T\right)
$ when all components $\Phi^{i}$ are of class $L_{p}^{q}\left(  T\right)  $.
Denote by $U_{\Phi}$ the $\mathbb{R}^{d}$-valued field such that $U_{\Phi}%
^{i}$ solves the heat equation above with $\varphi=-\Phi^{i}$;\ in vector
notations%
\[
\frac{\partial U_{\Phi}}{\partial t}+\frac{1}{2}\Delta U_{\Phi}=-\Phi\text{ on
}\left[  0,T\right]  ,\qquad U_{\Phi}|_{t=T}=0.
\]
We have $U_{\Phi}^{i}\in H_{2,p}^{q}(T)$, $i=1,...,d$. As above, we write
$U_{\Phi}\in H_{2,p}^{q}(T)$, for simplicity of notations.

Moreover, denote by $\mathcal{T}:L_{p}^{q}\left(  T\right)  \rightarrow
L_{p}^{q}\left(  T\right)  $ the map defined as%
\[
\mathcal{T}\left(  \Phi\right)  :=\left(  b\cdot\nabla\right)  U_{\Phi}.
\]

Using a generalization of It\^{o} formula to $H_{2,p}^{q}(T)$-functions (see
\cite[theorem 3.7]{KR05}), if $X$ is a solution to equation (\ref{SDE}) we
have%
\[
dU_{\Phi}\left(  t,X_{t}\right)  =-\Phi\left(  t,X_{t}\right)  \mathrm{d}%
t+\left(  b\left(  t,X_{t}\right)  \cdot\nabla\right)  U_{\Phi}\left(
t,X_{t}\right)  +\nabla U_{\Phi}\left(  t,X_{t}\right)  \mathrm{d}W_{t}%
\]
namely%
\[
\int_{0}^{t}\Phi\left(  s,X_{s}\right)  \mathrm{d}s=U_{\Phi}\left(
0,x\right)  -U_{\Phi}\left(  t,X_{t}\right)  +\int_{0}^{t}\mathcal{T}\left(
\Phi\right)  \left(  s,X_{s}\right)  \mathrm{d}s+\int_{0}^{t}\nabla U_{\Phi
}\left(  s,X_{s}\right)  \mathrm{d}W_{s}.
\]
Hence, taking $\Phi=b$, we can rewrite equation (\ref{SDE}) in the form%
\[
X_{t}=x+U_{b}\left(  0,x\right)  -U_{b}\left(  t,X_{t}\right)  +\int_{0}%
^{t}\mathcal{T}\left(  b\right)  \left(  s,X_{s}\right)  \mathrm{d}s+\int
_{0}^{t}\nabla U_{b}\left(  s,X_{s}\right)  \mathrm{d}W_{s}+W_{t}.
\]

Let us make several comments on this reformulation of equation (\ref{SDE}).
The difficulty in (\ref{SDE}) is the non-regular field $b$, only of class
$L_{p}^{q}\left(  T\right)  $. The terms $U_{b}\left(  0,x\right)  $,
$U_{b}\left(  t,X_{t}\right)  $ and $\int_{0}^{t}\nabla U_{b}\left(
s,X_{s}\right)  \mathrm{d}W_{s}$ of the new equation involve more regular
fields: $U_{b}$ has even H\"{o}lder continuous gradient, while $\nabla U_{b}$
has gradient in $L_{p}^{q}\left(  T\right)  $. On the contrary, the term
$\int_{0}^{t}\mathcal{T}\left(  b\right)  \left(  s,X_{s}\right)  \mathrm{d}s$
is not better than then original one, $\int_{0}^{t}b\left(  s,X_{s}\right)
\mathrm{d}s$, from the regularity viewpoint. But, if we take small $T$, the
$L_{p}^{q}\left(  T\right)  $-norm of $\left(  b\cdot\nabla\right)  U_{b}$ is
small as we want (because of (\ref{small C})). So we have replaced the
non-regular term in (\ref{SDE}) by more regular ones plus a term which has the
same degree of regularity but is much smaller. Iterating this procedure,
namely replacing $\int_{0}^{t}\mathcal{T}\left(  b\right)  \left(
s,X_{s}\right)  \mathrm{d}s$ by analogous terms, and so on $n$ times, we may
keep the time interval $\left[  0,T\right]  $ small but given, and decrease
arbitrarily the size of the non-regular term. We shall see that the sum of the
other term is under control.

To be more precise, we repeat what we have done above for $\int_{0}%
^{t}b\left(  s,X_{s}\right)  \mathrm{d}s$ and get
\[
\int_{0}^{t}\hspace{-0,1cm}\mathcal{T}\left(  b\right)  \left(  s,X_{s}\right)
\mathrm{d}s=U_{\mathcal{T}\left(  b\right)  }\left(  0,x\right)
-U_{\mathcal{T}\left(  b\right)  }\left(  t,X_{t}\right)  +\int_{0}%
^{t}\hspace{-0,1cm}\mathcal{T}^{2}\left(  b\right)  \left(  s,X_{s}\right)  \mathrm{d}%
s+\int_{0}^{t}\hspace{-0,1cm}\nabla U_{\mathcal{T}\left(  b\right)  }\left(  s,X_{s}\right)
\mathrm{d}W_{s}.
\]
We iterate this procedure, substitute in the original equation and get
\begin{align*}
X_{t}  &  =x+\sum_{k=0}^{n}U_{\mathcal{T}^{k}\left(  b\right)  }\left(
0,x\right)  -\sum_{k=0}^{n}U_{\mathcal{T}^{k}\left(  b\right)  }\left(
t,X_{t}\right)  +\int_{0}^{t}\mathcal{T}^{n+1}\left(  b\right)  \left(
s,X_{s}\right)  \mathrm{d}s\\
&  +\int_{0}^{t}\left(  \sum_{k=0}^{n}\nabla U_{\mathcal{T}^{k}\left(
b\right)  }\left(  s,X_{s}\right)  \right)  \mathrm{d}W_{s}+W_{t}.
\end{align*}
where we have set $\mathcal{T}^{0}\left(  b\right)  =b$. We shall prove our
results (uniqueness and pathwise continuous dependence on initial conditions)
for this equation.

To simplify a little the notations, let us set%
\[
U^{\left(  n\right)  }\left(  t,x\right)  =\sum_{k=0}^{n}U_{\mathcal{T}%
^{k}\left(  b\right)  }\left(  t,x\right)  ,\qquad b^{\left(  n\right)
}=\mathcal{T}^{n+1}\left(  b\right)  .
\]
The equation reads%
\[
X_{t}=x+U^{\left(  n\right)  }\left(  0,x\right)  -U^{\left(  n\right)
}\left(  t,X_{t}\right)  +\int_{0}^{t}b^{\left(  n\right)  }\left(
s,X_{s}\right)  \mathrm{d}s+\int_{0}^{t}\left(  \nabla U^{\left(  n\right)
}\left(  s,X_{s}\right)  +I\right)  \cdot\mathrm{d}W_{s}.
\]

We discuss first the case when $b$ is H\"{o}lder continuous, both to see this
equation at work in an easier case, and to show two different ways to handle
such an equation, in the $C_{b}^{\alpha}$ and $L_{p}^{q}$ cases.

\begin{remark}
\label{remark on equivalence}Intuitively speaking (it can be made rigorous),
if we pass to the limit in the previous identity we get%
\[
X_{t}=x+U\left(  0,x\right)  -U\left(  t,X_{t}\right)  +\int_{0}^{t}\left(
\nabla U\left(  s,X_{s}\right)  +I\right)  \cdot\mathrm{d}W_{s}%
\]
where $U$ is the solution of the backward Kolmogorov equation%
\[
\frac{\partial U}{\partial t}+\frac{1}{2}\Delta U+\left(  b\cdot\nabla\right)
U=-b\text{ on }\left[  0,T\right]  ,\qquad U_{\Phi}|_{t=T}=0
\]
used in \cite{FGP} (for H\"{o}lder continuous drift). These two approaches are
thus equivalent, in principle, but for conceptual reasons and possibly for
future extensions we would like to give a proof explicitly based only on the
heat equation.
\end{remark}

\section{The case when $b$ is H\"{o}lder continuous}

For $\alpha\in\left(  0,1\right)  $, denote by $C_{b}^{\alpha}\left(
\mathbb{R}^{d}\right)  $ the space of all continuous $f:\mathbb{R}%
^{d}\rightarrow\mathbb{R}$ such that
\[
\left\Vert u\right\Vert _{C_{b}^{\alpha}\left(  T\right)  }:=\sup
_{x\in\mathbb{R}^{d}}\left\vert u\left(  x\right)  \right\vert +\sup_{x\neq
y}\frac{\left\vert u\left(  x\right)  -u\left(  y\right)  \right\vert
}{\left\vert x-y\right\vert ^{\alpha}}<\infty.
\]
In this case we use the following well known result. In fact maximal
regularity $u\in C\left(  \left[  0,T\right]  ;C_{b}^{2,\alpha}\left(
\mathbb{R}^{d}\right)  \right)  $ is known, and uniqueness, but again we do
not need it for our result and strategy of proof.

\begin{theorem}
For all $\varphi\in C\left(  \left[  0,T\right]  ;C_{b}^{\alpha}\left(
\mathbb{R}^{d}\right)  \right)  $ there exists at least one solution $u$ to
the backward heat equation (\ref{heat eq}) of class
\[
u\in C\left(  \left[  0,T\right]  ;C_{b}^{2}\left(  \mathbb{R}^{d}\right)
\right)  \cap C^{1}\left(  \left[  0,T\right]  ;C_{b}^{\alpha}\left(
\mathbb{R}^{d}\right)  \right)
\]
with%
\begin{equation}
\left\Vert D^{2}u\right\Vert _{L^{\infty}\left(  T\right)  }\leq C\left\Vert
\varphi\right\Vert _{C_{b}^{\alpha}\left(  T\right)  }
\label{second derivative estimate}%
\end{equation}
and
\begin{equation}
\left\Vert \nabla u\right\Vert _{C_{b}^{\alpha}\left(  T\right)  }\leq
C\left(  T\right)  \left\Vert \varphi\right\Vert _{C_{b}^{\alpha}\left(
T\right)  }\text{ with }\lim_{T\rightarrow0}C\left(  T\right)  =0.
\label{bounded gradient 2}%
\end{equation}

\end{theorem}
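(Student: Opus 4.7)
The plan is to construct $u$ explicitly via Duhamel's formula with the heat semigroup $P_\tau = e^{\tau\Delta/2}$. Setting
$$u(t,x) = -\int_t^T \bigl(P_{r-t}\varphi(r,\cdot)\bigr)(x)\,\mathrm{d}r,\qquad 0\le t\le T,$$
a standard semigroup computation shows that, at least formally, $\partial_t u + \tfrac{1}{2}\Delta u = \varphi$ and $u(T,\cdot)=0$, and the rigorous justification of differentiating under the integral will be supplied by the pointwise bounds below.

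The main analytic tool is the classical Hölder smoothing of the Gaussian heat kernel: for every $\tau>0$ and $f\in C_b^\alpha(\mathbb{R}^d)$ one has
$$\|D^2 P_\tau f\|_\infty \le C\,\tau^{-1+\alpha/2}\|f\|_{C_b^\alpha},\qquad \|\nabla P_\tau f\|_{C_b^\alpha}\le C\,\tau^{-1/2}\|f\|_{C_b^\alpha}.$$
Both come from direct differentiation of the Gaussian kernel and a cancellation using $f(y)-f(x)$ in the integrand (this is the Schauder cancellation, and is the only delicate step in the whole argument). Inserting these bounds into the integral representation, the condition $\alpha>0$ makes the singularities at $r=t$ integrable and produces
$$\|D^2 u(t,\cdot)\|_\infty \le C\|\varphi\|_{C_b^\alpha(T)}\int_t^T(r-t)^{-1+\alpha/2}\mathrm{d}r \le C' T^{\alpha/2}\|\varphi\|_{C_b^\alpha(T)},$$
which yields (\ref{second derivative estimate}), and
$$\|\nabla u(t,\cdot)\|_{C_b^\alpha}\le C\|\varphi\|_{C_b^\alpha(T)}\int_t^T(r-t)^{-1/2}\mathrm{d}r \le 2C\sqrt{T}\,\|\varphi\|_{C_b^\alpha(T)},$$
giving (\ref{bounded gradient 2}) with $C(T)=2C\sqrt{T}\to 0$ as $T\to 0$.

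For the claimed time regularity, $u\in C([0,T];C_b^2)$ follows by dominated convergence in the Duhamel formula, since $P_{r-t}$ is strongly continuous in $t$ for each fixed $r>t$ and the integrand is uniformly dominated in $L^1(\mathrm{d}r)$ by the same estimates. The regularity $u\in C^1([0,T];C_b^\alpha)$ is then read off from the PDE itself, $\partial_t u = \varphi - \tfrac{1}{2}\Delta u$, whose two summands both lie in $C([0,T];C_b^\alpha)$ (for the Laplacian term one again invokes a $C_b^\alpha$-valued analogue of the $D^2$ estimate). The main obstacle, as already flagged, is establishing the Schauder cancellation $\|\nabla P_\tau f\|_{C_b^\alpha}\le C\tau^{-1/2}\|f\|_{C_b^\alpha}$: without it, the naive kernel estimate produces a non-integrable singularity in $r-t$, and it is precisely this cancellation that both delivers $\nabla u \in C_b^\alpha$ and forces $C(T)\to 0$, the smallness that will drive the contraction-type argument in the later sections.
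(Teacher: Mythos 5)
Your proposal is correct in substance, but note that the paper does not prove this statement at all: it is invoked as a ``well known result'' (classical Schauder theory for the heat equation, with a pointer to the analogous $L^q_p$ statement from Krylov), and the authors explicitly say they only need the two stated estimates, not maximal regularity or uniqueness. So there is no proof in the paper to compare against; what you have done is supply the standard proof of the classical fact, and your outline is the right one. The Duhamel representation $u(t,\cdot)=-\int_t^T P_{r-t}\varphi(r,\cdot)\,\mathrm{d}r$ has the correct sign for the backward equation $\partial_t u+\tfrac12\Delta u=\varphi$, the kernel bounds $\Vert D^2P_\tau f\Vert_\infty\le C\tau^{-1+\alpha/2}\Vert f\Vert_{C_b^\alpha}$ and $\Vert\nabla P_\tau f\Vert_{C_b^\alpha}\le C\tau^{-1/2}\Vert f\Vert_{C_b^\alpha}$ are the right ones, and integrating them in $r$ gives exactly (\ref{second derivative estimate}) and (\ref{bounded gradient 2}) with $C(T)\sim\sqrt T$. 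One point you flag but do not actually carry out deserves a word of caution: the H\"older \emph{seminorm} bound $[\nabla P_\tau f]_\alpha\le C\tau^{-1/2}[f]_\alpha$ does not follow from the single cancellation $\int\nabla p_\tau=0$ alone; one needs either the usual case distinction $|x-y|\le\sqrt\tau$ versus $|x-y|>\sqrt\tau$, or an interpolation between $\Vert\nabla P_\tau f\Vert_\infty\le C\tau^{-1/2+\alpha/2}[f]_\alpha$ and $\Vert D^2P_\tau f\Vert_\infty\le C\tau^{-1+\alpha/2}[f]_\alpha$ (the exponents combine to exactly $-1/2$). With that detail filled in, the argument is complete and is precisely the proof the paper chose to omit.
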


Due to estimate (\ref{second derivative estimate}), the proof of theorem
\ref{theorem} simplifies a lot. Let us remark that in this case theorem
\ref{theorem} is known, see \cite{FGP}, where it is proved that the equation
has a stochastic flow of diffeomorphisms.

\begin{lemma}
\label{lemma 1}Set%
\[
C_{n}\left(  T\right)  :=\sum_{k=0}^{n}\left\Vert \nabla U_{\mathcal{T}%
^{k}\left(  b\right)  }\right\Vert _{L^{\infty}\left(  T\right)  },\qquad
D_{n}\left(  T\right)  :=\sum_{k=0}^{n}\left\Vert D^{2}U_{\mathcal{T}%
^{k}\left(  b\right)  }\right\Vert _{L^{\infty}\left(  T\right)  }%
\]
Then there exists $T_{0}$ and $\ C>0$ such that for all $T\in(0, T_{0}]$ we have%
\[
C_{n}\left(  T\right)  \leq\frac{1}{2},\qquad D_{n}\left(  T\right)  \leq
C,\qquad\left\Vert \mathcal{T}^{n}\left(  b\right)  \right\Vert _{C_{b}%
^{\alpha}\left(  T\right)  }\leq C\frac{1}{2^{n}}%
\]
for every $n\in\mathbb{N}$.
\end{lemma}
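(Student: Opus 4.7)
The plan is to argue by a straightforward induction, using as the key ingredient that multiplication of Hölder functions is continuous on $C_b^\alpha$ and that $\|\nabla U_\Phi\|_{C_b^\alpha(T)}$ is small for small $T$.

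First I would establish the contraction estimate for $\mathcal{T}$ on $C_b^\alpha$. Since $\mathcal{T}(\Phi)^j = \sum_i b^i\,\partial_i U_\Phi^j$ is a finite sum of products of two $C_b^\alpha$ functions, the standard product estimate $\|fg\|_{C_b^\alpha} \le c_\alpha \|f\|_{C_b^\alpha}\|g\|_{C_b^\alpha}$ combined with (\ref{bounded gradient 2}) gives
\[
\|\mathcal{T}(\Phi)\|_{C_b^\alpha(T)} \le c_\alpha\, \|b\|_{C_b^\alpha(T)}\, \|\nabla U_\Phi\|_{C_b^\alpha(T)} \le c_\alpha\, \|b\|_{C_b^\alpha(T)}\, C(T)\, \|\Phi\|_{C_b^\alpha(T)}.
\]
Choose $T_0>0$ so small that $\kappa(T_0) := c_\alpha\,\|b\|_{C_b^\alpha(T_0)}\,C(T_0) \le 1/2$; this is possible because $C(T)\to 0$ as $T\to 0$ by (\ref{bounded gradient 2}), and $\|b\|_{C_b^\alpha(T)}$ is monotone in $T$. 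Then for every $T\in(0,T_0]$, $\mathcal{T}$ is a $1/2$-contraction on $C_b^\alpha(T)$, so iterating yields the third inequality with $C := \|b\|_{C_b^\alpha(T_0)}$:
\[
\|\mathcal{T}^n(b)\|_{C_b^\alpha(T)} \le 2^{-n}\,\|b\|_{C_b^\alpha(T)} \le C\,2^{-n}.
\]

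Next, to control $C_n(T)$ and $D_n(T)$ I would simply feed this geometric decay into the heat-equation estimates (\ref{second derivative estimate})–(\ref{bounded gradient 2}) applied with $\varphi = -\mathcal{T}^k(b)$. For the gradient sum, using $\|\nabla U_{\mathcal{T}^k(b)}\|_{L^\infty(T)} \le \|\nabla U_{\mathcal{T}^k(b)}\|_{C_b^\alpha(T)} \le C(T)\,\|\mathcal{T}^k(b)\|_{C_b^\alpha(T)}$, I get
\[
C_n(T) \le C(T)\,\sum_{k=0}^n 2^{-k}\,C \le 2\,C\,C(T),
\]
which is bounded by $1/2$ after possibly shrinking $T_0$ a bit further. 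For the second-derivative sum, (\ref{second derivative estimate}) directly gives
\[
D_n(T) \le C'\sum_{k=0}^n \|\mathcal{T}^k(b)\|_{C_b^\alpha(T)} \le C'\sum_{k=0}^n C\,2^{-k} \le 2C'C,
\]
uniformly in $n$, which yields the second inequality.

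The only real point to check is the product estimate in $C_b^\alpha$, which is standard, together with the fact that $C(T)\to 0$, which is exactly (\ref{bounded gradient 2}). Everything else is geometric summation. No step presents a serious obstacle; the whole content of the lemma is the observation that the smallness of $C(T)$ makes $\mathcal{T}$ a contraction on the Hölder space, and that the $L^\infty$-norms of $\nabla U_{\mathcal{T}^k(b)}$ and $D^2 U_{\mathcal{T}^k(b)}$ inherit this geometric decay.
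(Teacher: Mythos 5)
Your proof is correct and follows essentially the same route as the paper's: use the smallness of $C(T)$ in (\ref{bounded gradient 2}) together with the product estimate on $C_b^\alpha$ to make $\mathcal{T}$ a contraction, obtain geometric decay of $\left\Vert \mathcal{T}^{k}(b)\right\Vert _{C_{b}^{\alpha}(T)}$, and then sum, invoking (\ref{second derivative estimate}) for $D_{n}(T)$. The only cosmetic difference is that the paper fixes $\varepsilon=(4\left\Vert b\right\Vert _{C_{b}^{\alpha}(T)})^{-1}$ and tracks $\left\Vert \nabla U_{\mathcal{T}^{k}(b)}\right\Vert _{C_{b}^{\alpha}(T)}\leq 4^{-(k+1)}$ explicitly by induction, so that $C_{n}(T)\leq 1/2$ comes out directly without shrinking $T_{0}$ a second time.
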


\begin{proof}
Let
\[
\varepsilon=\left(  4\left\Vert b\right\Vert _{C_{b}^{\alpha}\left(  T\right)
}\right)  ^{-1}%
\]
(unless $b=0$, when there is nothing to prove). Due to
(\ref{bounded gradient 2}), we may choose $T_{0}$ such that for all $T\in
(0, T_{0}]$ we have%
\[
\left\Vert \nabla U_{\Phi}\right\Vert _{C_{b}^{\alpha}\left(  T\right)  }%
\leq\varepsilon\left\Vert \Phi\right\Vert _{C_{b}^{\alpha}\left(  T\right)
}.
\]
Thus%
\[
\left\Vert \nabla U_{b}\right\Vert _{C_{b}^{\alpha}\left(  T\right)  }%
\leq\varepsilon\left\Vert b\right\Vert _{C_{b}^{\alpha}\left(  T\right)  }%
\]%
\[
\left\Vert \mathcal{T}^{1}\left(  b\right)  \right\Vert _{C_{b}^{\alpha
}\left(  T\right)  }\leq\left\Vert \nabla U_{b}\right\Vert _{C_{b}^{\alpha
}\left(  T\right)  }\left\Vert b\right\Vert _{C_{b}^{\alpha}\left(  T\right)
}\leq\varepsilon\left\Vert b\right\Vert _{C_{b}^{\alpha}\left(  T\right)
}^{2}%
\]%
\[
\left\Vert \nabla U_{\mathcal{T}^{1}\left(  b\right)  }\right\Vert
_{C_{b}^{\alpha}\left(  T\right)  }\leq\varepsilon^{2}\left\Vert b\right\Vert
_{C_{b}^{\alpha}\left(  T\right)  }^{2}%
\]%
\[
\left\Vert \mathcal{T}^{2}\left(  b\right)  \right\Vert _{C_{b}^{\alpha
}\left(  T\right)  }\leq\left\Vert \nabla U_{\mathcal{T}^{1}\left(  b\right)
}\right\Vert _{C_{b}^{\alpha}\left(  T\right)  }\left\Vert b\right\Vert
_{C_{b}^{\alpha}\left(  T\right)  }\leq\varepsilon^{2}\left\Vert b\right\Vert
_{C_{b}^{\alpha}\left(  T\right)  }^{3}%
\]
and so on; by induction, one can see that%
\begin{align*}
\left\Vert \nabla U_{\mathcal{T}^{k}\left(  b\right)  }\right\Vert
_{C_{b}^{\alpha}\left(  T\right)  }  &  \leq\varepsilon^{k+1}\left\Vert
b\right\Vert _{C_{b}^{\alpha}\left(  T\right)  }^{k+1}\leq4^{-\left(
k+1\right)  }\\
\left\Vert \mathcal{T}^{k}\left(  b\right)  \right\Vert _{C_{b}^{\alpha
}\left(  T\right)  }  &  \leq\varepsilon^{k}\left\Vert b\right\Vert
_{C_{b}^{\alpha}\left(  T\right)  }^{k+1}\leq4^{-k}\left\Vert b\right\Vert
_{C_{b}^{\alpha}\left(  T\right)  }.
\end{align*}
Thus $C_{n}\left(  T\right)  \leq\sum_{k=0}^{n}4^{-\left(  k+1\right)  }%
\leq1/2$ and $\left\Vert \mathcal{T}^{n}\left(  b\right)  \right\Vert
_{L_{p}^{q}\left(  T\right)  }\leq C\frac{1}{2^{n}}$ for some constant $C>0$.
Moreover, for some constants $C^{\prime},C^{\prime\prime}>0$
\[
D_{n}\left(  T\right)  \leq\sum_{k=0}^{n}C^{\prime}\left\Vert \mathcal{T}%
^{k}\left(  b\right)  \right\Vert _{C_{b}^{\alpha}\left(  T\right)  }\leq
C^{\prime\prime}\sum_{k=0}^{n}2^{-k}\leq2C^{\prime\prime}.
\]
The proof is complete.
\end{proof}

Let $X_{t}^{\left(  i\right)  }$, $i=1,2$, be two solutions, with initial
conditions $x^{\left(  i\right)  }$, $i=1,2$. Given any $p\geq2$, let us
estimate
\begin{equation}
E\left[  \sup_{t\in\left[  0,T\right]  }\left\vert X_{t}^{\left(  1\right)
}-X_{t}^{\left(  2\right)  }\right\vert ^{p}\right]  . \label{to be estimated}%
\end{equation}
We have%
\[
X_{t}^{\left(  1\right)  }-X_{t}^{\left(  2\right)  }=a_{t}+b_{t}+c_{t}+d_{t}%
\]
where%
\begin{align*}
a_{t}  &  =x^{\left(  1\right)  }-x^{\left(  2\right)  }+U^{\left(  n\right)
}\left(  0,x^{\left(  1\right)  }\right)  -U^{\left(  n\right)  }\left(
0,x^{\left(  2\right)  }\right) \\
b_{t}  &  =U^{\left(  n\right)  }\left(  t,X_{t}^{\left(  1\right)  }\right)
-U^{\left(  n\right)  }\left(  0,X_{t}^{\left(  2\right)  }\right) \\
c_{t}  &  =\int_{0}^{t}b^{\left(  n\right)  }\left(  s,X_{s}^{\left(
1\right)  }\right)  \mathrm{d}s-\int_{0}^{t}b^{\left(  n\right)  }\left(
s,X_{s}^{\left(  2\right)  }\right)  \mathrm{d}s\\
d_{t}  &  =\int_{0}^{t}\left(  \nabla U^{\left(  n\right)  }\left(
s,X_{s}^{\left(  1\right)  }\right)  -\nabla U^{\left(  n\right)  }\left(
s,X_{s}^{\left(  2\right)  }\right)  \right)  \cdot\mathrm{d}W_{s}.
\end{align*}
We use the inequality%
\begin{equation}
\left\vert a_{t}+b_{t}+c_{t}+d_{t}\right\vert ^{p}\leq\frac{3}{2}\left\vert
b_{t}\right\vert ^{p}+C_{p}\left\vert a_{t}\right\vert ^{p}+C_{p}\left\vert
c_{t}\right\vert ^{p}+C_{p}\left\vert d_{t}\right\vert ^{p}.
\label{strange ineq}%
\end{equation}
Let us take $T\leq T_{0}$ given by the lemma. With new values of $C_{p}$ when
necessary, from the estimates of the lemma we have%
\[
E\left[  \sup_{t\in\left[  0,T\right]  }\left\vert a_{t}\right\vert
^{p}\right]  \leq C_{p}\left\vert x^{\left(  1\right)  }-x^{\left(  2\right)
}\right\vert ^{p}+C_{p}C_{n}^{p}\left(  T\right)  \left\vert x^{\left(
1\right)  }-x^{\left(  2\right)  }\right\vert ^{p}\leq C_{p}\left\vert
x^{\left(  1\right)  }-x^{\left(  2\right)  }\right\vert ^{p}%
\]
because%
\begin{align*}
\left\vert \sum_{k=0}^{n}U_{\mathcal{T}^{k}\left(  b\right)  }\left(
0,x^{\left(  1\right)  }\right)  -\sum_{k=0}^{n}U_{\mathcal{T}^{k}\left(
b\right)  }\left(  0,x^{\left(  2\right)  }\right)  \right\vert  &  \leq
\sum_{k=0}^{n}\left\vert U_{\mathcal{T}^{k}\left(  b\right)  }\left(
0,x^{\left(  1\right)  }\right)  -U_{\mathcal{T}^{k}\left(  b\right)  }\left(
0,x^{\left(  2\right)  }\right)  \right\vert \\
&  \leq\sum_{k=0}^{n}\left\Vert \nabla U_{\mathcal{T}^{k}\left(  b\right)
}\right\Vert _{L^{\infty}\left(  T\right)  }\left\vert x^{\left(  1\right)
}-x^{\left(  2\right)  }\right\vert \\
&  =C_{n}\left(  T\right)  \left\vert x^{\left(  1\right)  }-x^{\left(
2\right)  }\right\vert .
\end{align*}

\bigskip Similarly%
\[
E\left[  \sup_{t\in\left[  0,T\right]  }\left\vert b_{t}\right\vert
^{p}\right]  \leq\frac{1}{2^{p}}E\left[  \sup_{t\in\left[  0,T\right]
}\left\vert X_{t}^{\left(  1\right)  }-X_{t}^{\left(  2\right)  }\right\vert
^{p}\right]  .
\] 
These are the first two terms which contribute to estimate from above the
quantity (\ref{to be estimated}). The estimate of the third term $\left\vert
c_{t}\right\vert ^{p}$ is made by estimating the following two terms,
$i=1,2$,
\begin{align*}
E\left[  \sup_{t\in\left[  0,T\right]  }\left\vert \int_{0}^{t}\mathcal{T}%
^{n+1}\left(  b\right)  \left(  s,X_{s}^{\left(  i\right)  }\right)
\mathrm{d}s\right\vert ^{p}\right]  &\leq E\left[  \left(  \int_{0}%
^{T}\left\vert \mathcal{T}^{n+1}\left(  b\right)  \left(  s,X_{s}^{\left(
i\right)  }\right)  \right\vert \mathrm{d}s\right)  ^{p}\right]\\ 
&  \leq
C^{p}T^{p}2^{-\left(  n+1\right)  p}.
\end{align*}
Finally, the forth term is%
\[
E\left[  \sup_{t\in\left[  0,T\right]  }\left\vert d_{t}\right\vert
^{p}\right]  \leq C_{p}E\left[  \left(  \int_{0}^{T}\left\Vert \sum_{k=0}%
^{n}\left(  \nabla U_{\mathcal{T}^{k}\left(  b\right)  }\left(  s,X_{s}%
^{\left(  1\right)  }\right)  -\nabla U_{\mathcal{T}^{k}\left(  b\right)
}\left(  s,X_{s}^{\left(  2\right)  }\right)  \right)  \right\Vert
^{2}\mathrm{d}s\right)  ^{p/2}\right]  .
\]
We have%
\begin{align*}
&  \left\Vert \sum_{k=0}^{n}\left(  \nabla U_{\mathcal{T}^{k}\left(  b\right)
}\left(  s,X_{s}^{\left(  1\right)  }\right)  -\nabla U_{\mathcal{T}%
^{k}\left(  b\right)  }\left(  s,X_{s}^{\left(  2\right)  }\right)  \right)
\right\Vert \\
& \hspace{2cm}  \leq\sum_{k=0}^{n}\left\Vert \nabla U_{\mathcal{T}^{k}\left(  b\right)
}\left(  s,X_{s}^{\left(  1\right)  }\right)  -\nabla U_{\mathcal{T}%
^{k}\left(  b\right)  }\left(  s,X_{s}^{\left(  2\right)  }\right)
\right\Vert \\
& \hspace{2cm}  \leq\sum_{k=0}^{n}\left\Vert D^{2}U_{\mathcal{T}^{k}\left(  b\right)
}\right\Vert _{L^{\infty}\left(  T\right)  }\left\vert X_{s}^{\left(
1\right)  }-X_{s}^{\left(  2\right)  }\right\vert =D_{n}\left(  T\right)
\left\vert X_{s}^{\left(  1\right)  }-X_{s}^{\left(  2\right)  }\right\vert
\end{align*}
hence%
\begin{align*}
&  E\left[  \sup_{t\in\left[  0,T\right]  }\left\vert \int_{0}^{t}\left(
\sum_{k=0}^{n}\nabla U_{\mathcal{T}^{k}\left(  b\right)  }\left(
s,X_{s}^{\left(  1\right)  }\right)  -\sum_{k=0}^{n}\nabla U_{\mathcal{T}%
^{k}\left(  b\right)  }\left(  s,X_{s}^{\left(  2\right)  }\right)  \right)
\mathrm{d}W_{s}\right\vert ^{p}\right] \\
& \hspace{2cm} \leq C_{p}E\left[  \left(  \int_{0}^{T}\left\vert X_{s}^{\left(  1\right)
}-X_{s}^{\left(  2\right)  }\right\vert ^{2}\mathrm{d}s\right)  ^{p/2}\right]
\leq C_{p}T^{p/2}E\left[  \sup_{t\in\left[  0,T\right]  }\left\vert
X_{t}^{\left(  1\right)  }-X_{t}^{\left(  2\right)  }\right\vert ^{p}\right]
.
\end{align*}
Summarizing, using (\ref{strange ineq}) we have proved%
\begin{align*}
&  E\left[  \sup_{t\in\left[  0,T\right]  }\left\vert X_{t}^{\left(  1\right)
}-X_{t}^{\left(  2\right)  }\right\vert ^{p}\right] \\
&\hspace{1cm}  \leq C_{p}\left\vert x^{\left(  1\right)  }-x^{\left(  2\right)
}\right\vert ^{p}+\left(  \frac{3}{2}\frac{1}{2^{p}}+C_{p}T^{p/2}\right)
E\left[  \sup_{t\in\left[  0,T\right]  }\left\vert X_{t}^{\left(  1\right)
}-X_{t}^{\left(  2\right)  }\right\vert ^{p}\right]  +C^{p}T^{p}2^{-\left(
n+1\right)  p}.
\end{align*}
Since this is true for every $n$, we have%
\[
E\left[  \sup_{t\in\left[  0,T\right]  }\left\vert X_{t}^{\left(  1\right)
}-X_{t}^{\left(  2\right)  }\right\vert ^{p}\right]  \leq C_{p}\left\vert
x^{\left(  1\right)  }-x^{\left(  2\right)  }\right\vert ^{p}+\left(  \frac
{3}{2}\frac{1}{2^{p}}+C_{p}T^{p/2}\right)  E\left[  \sup_{t\in\left[
0,T\right]  }\left\vert X_{t}^{\left(  1\right)  }-X_{t}^{\left(  2\right)
}\right\vert ^{p}\right]  .
\]
Then there exists $T_{1}>0$ such that
\[
E\left[  \sup_{t\in\left[  0,T_{1}\right]  }\left\vert X_{t}^{\left(
1\right)  }-X_{t}^{\left(  2\right)  }\right\vert ^{p}\right]  \leq
C_{p}\left\vert x^{\left(  1\right)  }-x^{\left(  2\right)  }\right\vert
^{p}.
\]
This implies uniqueness and the existence of the modification (by Kolmogorov
regularity theorem for fields with values in $C\left(  \left[  0,T\right]
;\mathbb{R}\right)  $ and the arbitrariness of $p\geq2$), as claimed by the
theorem, over the time interval $\left[  0,T_{1}\right]  $. By classical
arguments one can iterate the result on successive intervals (their size does
not change), so the result is true over $\left[  0,T\right]  $. The proof in
the H\"{o}lder case is complete.

\begin{remark}
\label{remark lambda}One can work on the full initial interval $\left[
0,T\right]  $ from the beginning, by means of the following trick, developed
in \cite{FGP}: one takes the heat equation with damping%
\[
\frac{\partial u}{\partial t}+\frac{1}{2}\Delta u=\lambda u+\varphi\text{ on
}\left[  0,T\right]  ,\qquad u|_{t=T}=0
\]
and use the fact that for large $\lambda$ the gradient of $u$ is uniformly
small (like $\lambda^{-1/2}$).
\end{remark}

\section{General case}

Let us now go back to the general case when $b\in L_{p}^{q}\left(  T\right)
$. The main novelty is that $d_{t}$ cannot be estimated as above, since
$D^{2}U$ is not bounded. We use two tricks to overcome this apparently very
serious difficulty, used before in other works on uniqueness for certain
nonlinear equations: introduce a suitable increasing process $A_{t}$ related
to $D^{2}U$ (see \cite{Ver}) and pre-multiply by $e^{-A_{t}}$ (see
\cite{Schmal}, \cite{DD03}).

Let again $X_{t}^{\left(  i\right)  }$, $i=1,2$, be two solutions with initial
conditions $x^{\left(  i\right)  }$, $i=1,2$. Given any $p\geq2$, we want to
estimate (\ref{to be estimated}). We follow a different route with respect to
the previous section. Set, for $i=1,2$ and $n\in\mathbb{N}$,
\begin{align*}
Y_{t}^{\left(  i,n\right)  }  &  :=X_{t}^{\left(  i\right)  }+U^{\left(
n\right)  }\left(  t,X_{t}^{\left(  i\right)  }\right)  ,\\
b_{t}^{\left(  i,n\right)  }  &  :=b^{\left(  n\right)  }\left(
t,X_{t}^{\left(  i\right)  }\right)  ,\qquad\sigma_{t}^{\left(  i,n\right)
}:=\nabla U^{\left(  n\right)  }\left(  t,X_{t}^{\left(  i\right)  }\right)  .
\end{align*}
We drop the index $n$ in intermediate computations, when it is not essential
to emphasize the dependence on $n$. The equation reads now%
\[
Y_{t}^{\left(  i\right)  }=Y_{0}^{\left(  i\right)  }+\int_{0}^{t}%
b_{s}^{\left(  i\right)  }\mathrm{d}s+\int_{0}^{t}\left(  \sigma_{s}^{\left(
i\right)  }+I\right)  \cdot\mathrm{d}W_{s}.
\]
Controlling $\left\vert X_{t}^{\left(  1\right)  }-X_{t}^{\left(  2\right)
}\right\vert $ is the same as $\left\vert Y_{t}^{\left(  1,n\right)  }%
-Y_{t}^{\left(  2,n\right)  }\right\vert $, for each $n$, for small $T$ (this
reminds \cite[lemma 10.6]{KR05}, although the approach is different)

\begin{lemma}
\label{lemma 2}Recall the notation $C_{n}\left(  T\right)  $ from lemma
\ref{lemma 1}. There exists $T_{0}$ such that for all $T\in(0,$\bigskip
$T_{0}]$ we have
\begin{equation}
C_{n}\left(  T\right)  \leq\frac{1}{2},\qquad\left\Vert \mathcal{T}^{n}\left(
b\right)  \right\Vert _{L_{p}^{q}\left(  T\right)  }\leq C\frac{1}{2^{n}}.
\label{same bounds}%
\end{equation}
It follows also%
\begin{align*}
\left\vert Y_{t}^{\left(  1,n\right)  }-Y_{t}^{\left(  2,n\right)
}\right\vert  &  \leq\frac{3}{2}\left\vert X_{t}^{\left(  1\right)  }%
-X_{t}^{\left(  2\right)  }\right\vert \\
\left\vert X_{t}^{\left(  1\right)  }-X_{t}^{\left(  2\right)  }\right\vert
&  \leq2\left\vert Y_{t}^{\left(  1,n\right)  }-Y_{t}^{\left(  2,n\right)
}\right\vert
\end{align*}
for $t\in\left[  0,T\right]  $ and%
\[
\left\Vert \nabla U^{\left(  n\right)  }\right\Vert _{L^{\infty}\left(
T\right)  }\leq\frac{1}{2},\qquad\left\Vert D^{2}U^{\left(  n\right)
}\right\Vert _{L_{p}^{q}\left(  T\right)  }\leq C
\]
for some constant $C>0$.
\end{lemma}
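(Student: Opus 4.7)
The plan is to mirror the geometric-series bootstrap of Lemma 1, simply replacing the $C_b^\alpha(T)$ norm on the drift by $\|\cdot\|_{L_p^q(T)}$ and using the heat-equation theorem recalled at the start of Section 2 in place of the Hölder version. The multiplicative estimate that drives the induction is
\[
\|\mathcal{T}(\Phi)\|_{L_p^q(T)} = \|(b\cdot\nabla)U_\Phi\|_{L_p^q(T)} \leq \|\nabla U_\Phi\|_{L^\infty(T)}\,\|b\|_{L_p^q(T)} \leq C(T)\,\|b\|_{L_p^q(T)}\,\|\Phi\|_{L_p^q(T)},
\]
where the first inequality is the pointwise product bound (since $\nabla U_\Phi$ is bounded) and the second is (\ref{bounded gradient}).

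First I would set $\varepsilon := C(T)\|b\|_{L_p^q(T)}$ and invoke (\ref{small C}) to pick $T_0$ so that $\varepsilon \leq 1/4$ for every $T \leq T_0$ (assuming $b \not\equiv 0$, otherwise there is nothing to prove). An immediate induction then gives $\|\mathcal{T}^k(b)\|_{L_p^q(T)} \leq \varepsilon^k \|b\|_{L_p^q(T)}$, whence by (\ref{bounded gradient}) and (\ref{maximal})
\[
\|\nabla U_{\mathcal{T}^k(b)}\|_{L^\infty(T)} \leq \varepsilon^{k+1}, \qquad \|D^2 U_{\mathcal{T}^k(b)}\|_{L_p^q(T)} \leq C\,\varepsilon^k\,\|b\|_{L_p^q(T)}.
\]
Summing the geometric series and recalling $U^{(n)} = \sum_{k=0}^n U_{\mathcal{T}^k(b)}$ then yields all four displayed bounds at once: $C_n(T) \leq \sum_{k\geq 0}\varepsilon^{k+1} \leq 1/2$, $\|\mathcal{T}^n(b)\|_{L_p^q(T)} \leq C\,2^{-n}$ (which only needs $\varepsilon \leq 1/2$), $\|\nabla U^{(n)}\|_{L^\infty(T)} = C_n(T) \leq 1/2$, and $\|D^2 U^{(n)}\|_{L_p^q(T)} \leq C\sum_{k\geq 0}\varepsilon^k \|b\|_{L_p^q(T)} \leq C'$.

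Next I would derive the two comparison inequalities for $Y^{(i,n)}$ and $X^{(i)}$. Using $Y_t^{(i,n)} = X_t^{(i)} + U^{(n)}(t,X_t^{(i)})$ and the Lipschitz bound
\[
|U^{(n)}(t,x)-U^{(n)}(t,y)| \leq \|\nabla U^{(n)}\|_{L^\infty(T)}\,|x-y| \leq \tfrac{1}{2}\,|x-y|
\]
just proved, the direct triangle inequality gives $|Y_t^{(1,n)}-Y_t^{(2,n)}| \leq \tfrac{3}{2}|X_t^{(1)}-X_t^{(2)}|$; reversing the triangle inequality and absorbing the $\tfrac{1}{2}|X_t^{(1)}-X_t^{(2)}|$ term onto the left-hand side gives $|X_t^{(1)}-X_t^{(2)}| \leq 2|Y_t^{(1,n)}-Y_t^{(2,n)}|$.

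There is really no obstacle here beyond bookkeeping: the statement is the $L_p^q$-analogue of Lemma 1, and the only qualitative difference from the Hölder case is that the uniform control of $D^2 U^{(n)}$ available before has to be replaced by a control in the weaker norm $L_p^q(T)$. That loss of regularity is precisely what forces, in the remainder of the section, the Veretennikov-type increasing process $A_t$ and the multiplicative weight $e^{-A_t}$ announced at the beginning of this section in order to close the stochastic-integral estimate for the term $d_t$.
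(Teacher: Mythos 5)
Your proposal is correct and follows essentially the same route as the paper: the same multiplicative estimate $\|\mathcal{T}(\Phi)\|_{L_p^q(T)}\leq\|\nabla U_\Phi\|_{L^\infty(T)}\|b\|_{L_p^q(T)}$, the same choice of $T_0$ via (\ref{small C}) so that the geometric ratio is at most $1/4$, the same summation of the resulting series, and the same triangle-inequality argument (with absorption of the $\frac12|X^{(1)}_t-X^{(2)}_t|$ term) for the two comparison inequalities between $Y^{(i,n)}$ and $X^{(i)}$. The only cosmetic difference is that your $\varepsilon$ plays the role of the paper's $\varepsilon\|b\|_{L_p^q(T)}$.
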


\begin{proof}
Using (\ref{bounded gradient}) instead of (\ref{bounded gradient 2}) one can
prove (\ref{same bounds}) as in the case of lemma \ref{lemma 1}. The
modifications are that we use
\[
\varepsilon=\left(  4\left\Vert b\right\Vert _{L_{p}^{q}\left(  T\right)
}\right)  ^{-1}%
\]
and we get the inequalities%
\[
\left\Vert \nabla U_{b}\right\Vert _{L^{\infty}\left(  T\right)  }%
\leq\varepsilon\left\Vert b\right\Vert _{L_{p}^{q}\left(  T\right)  }%
\]%
\[
\left\Vert \mathcal{T}^{1}\left(  b\right)  \right\Vert _{L_{p}^{q}\left(
T\right)  }\leq\left\Vert \nabla U_{b}\right\Vert _{L^{\infty}\left(
T\right)  }\left\Vert b\right\Vert _{L_{p}^{q}\left(  T\right)  }%
\leq\varepsilon\left\Vert b\right\Vert _{L_{p}^{q}\left(  T\right)  }^{2}%
\]
and so on by iteration. We do not rewrite all the details. Having proved
(\ref{same bounds}), we have (using a simple approximation argument to write
the estimate with $\Vert \nabla U_{\mathcal{T}^{k}\left(  b\right)
}\Vert _{L^{\infty}\left(  T\right)  }$)
\begin{align*}
\left\vert Y_{t}^{\left(  1\right)  }-Y_{t}^{\left(  2\right)  }\right\vert
&  \leq\left\vert X_{t}^{\left(  1\right)  }-X_{t}^{\left(  2\right)
}\right\vert +\sum_{k=0}^{n}\left\Vert \nabla U_{\mathcal{T}^{k}\left(
b\right)  }\right\Vert _{L^{\infty}\left(  T\right)  }\left\vert
X_{t}^{\left(  1\right)  }-X_{t}^{\left(  2\right)  }\right\vert \\
&  =\left\vert X_{t}^{\left(  1\right)  }-X_{t}^{\left(  2\right)
}\right\vert +C_{n}\left(  T\right)  \left\vert X_{t}^{\left(  1\right)
}-X_{t}^{\left(  2\right)  }\right\vert \leq\frac{3}{2}\left\vert
X_{t}^{\left(  1\right)  }-X_{t}^{\left(  2\right)  }\right\vert
\end{align*}
and%
\begin{align*}
\left\vert X_{t}^{\left(  1\right)  }-X_{t}^{\left(  2\right)  }\right\vert
&  \leq\left\vert Y_{t}^{\left(  1\right)  }-Y_{t}^{\left(  2\right)
}\right\vert +\left\vert \sum_{k=0}^{n}U_{\mathcal{T}^{k}\left(  b\right)
}\left(  t,X_{t}^{\left(  1\right)  }\right)  -U_{\mathcal{T}^{k}\left(
b\right)  }\left(  t,X_{t}^{\left(  2\right)  }\right)  \right\vert \\
&  \leq\left\vert Y_{t}^{\left(  1\right)  }-Y_{t}^{\left(  2\right)
}\right\vert +\frac{1}{2}\left\vert X_{t}^{\left(  1\right)  }-X_{t}^{\left(
2\right)  }\right\vert
\end{align*}
and thus $\left\vert X_{t}^{\left(  1\right)  }-X_{t}^{\left(  2\right)
}\right\vert \leq2\left\vert Y_{t}^{\left(  1\right)  }-Y_{t}^{\left(
2\right)  }\right\vert $. Finally, $\left\Vert \nabla U^{\left(  n\right)
}\right\Vert _{L^{\infty}\left(  T\right)  }\leq C_{n}\left(  T\right)  $ and
\[
\left\Vert D^{2}U^{\left(  n\right)  }\right\Vert _{L_{p}^{q}\left(  T\right)
}\leq\sum_{k=0}^{n}\left\Vert D^{2}U_{\mathcal{T}^{k}\left(  b\right)
}\right\Vert _{L_{p}^{q}\left(  T\right)  }\leq C\sum_{k=0}^{n}\left\Vert
\mathcal{T}^{k}\left(  b\right)  \right\Vert _{L_{p}^{q}\left(  T\right)  }%
\]
by (\ref{maximal}), and the series converges by (\ref{same bounds}). The proof
is complete.
\end{proof}

By It\^{o} formula we have%
\begin{align*}
d\left\vert Y_{t}^{\left(  1\right)  }-Y_{t}^{\left(  2\right)  }\right\vert
^{p}  &  \leq p\left\vert Y_{t}^{\left(  1\right)  }-Y_{t}^{\left(  2\right)
}\right\vert ^{p-1}\left\vert b_{t}^{\left(  1\right)  }-b_{t}^{\left(
2\right)  }\right\vert \mathrm{d}t\\
&  +p\left\vert Y_{t}^{\left(  1\right)  }-Y_{t}^{\left(  2\right)
}\right\vert ^{p-2}\left\langle Y_{t}^{\left(  1\right)  }-Y_{t}^{\left(
2\right)  },\left(  \sigma_{t}^{\left(  1\right)  }-\sigma_{t}^{\left(
2\right)  }\right)  \cdot\mathrm{d}W_{t}\right\rangle \\
&  +C_{p}^{\ast}\left\vert Y_{t}^{\left(  1\right)  }-Y_{t}^{\left(  2\right)
}\right\vert ^{p-2}\left\Vert \sigma_{t}^{\left(  1\right)  }-\sigma
_{t}^{\left(  2\right)  }\right\Vert ^{2}\mathrm{d}t
\end{align*}
for a suitable constant $C_{p}^{\ast}$. Following Veretennikov \cite{Ver},
denote by $\frac{\left\Vert \sigma_{t}^{\left(  1\right)  }-\sigma
_{t}^{\left(  2\right)  }\right\Vert ^{2}}{\left\vert Y_{t}^{\left(  1\right)
}-Y_{t}^{\left(  2\right)  }\right\vert ^{2}}1_{\left\{  Y_{t}^{\left(
1\right)  }\neq Y_{t}^{\left(  2\right)  }\right\}  }$ the non negative
function equal to $\frac{\left\Vert \sigma_{t}^{\left(  1\right)  }-\sigma
_{t}^{\left(  2\right)  }\right\Vert ^{2}}{\left\vert Y_{t}^{\left(  1\right)
}-Y_{t}^{\left(  2\right)  }\right\vert ^{2}}$ when $Y_{t}^{\left(  1\right)
}\neq Y_{t}^{\left(  2\right)  }$ and equal to zero otherwise. Set
\[
A_{t}^{\left(  n\right)  }:=\int_{0}^{t}\frac{\left\Vert \sigma_{s}^{\left(
1,n\right)  }-\sigma_{s}^{\left(  2,n\right)  }\right\Vert ^{2}}{\left\vert
Y_{s}^{\left(  1,n\right)  }-Y_{s}^{\left(  2,n\right)  }\right\vert ^{2}%
}1_{\left\{  Y_{s}^{\left(  1,n\right)  }\neq Y_{s}^{\left(  2,n\right)
}\right\}  }\mathrm{d}s
\]
(we write $A_{t}$ when $n$ is not the main concern) which a priori may be
infinite. We shall prove below, lemma \ref{lemma 4}, that this is a finite,
even exponentially integrable uniformly in $n$, increasing non negative
process. Then%
\begin{align*}
d\left(  e^{-C_{p}^{\ast}A_{t}}\left\vert Y_{t}^{\left(  1\right)  }%
-Y_{t}^{\left(  2\right)  }\right\vert ^{p}\right)   &  \leq e^{-C_{p}^{\ast
}A_{t}}p\left\vert Y_{t}^{\left(  1\right)  }-Y_{t}^{\left(  2\right)
}\right\vert ^{p-1}\left\vert b_{t}^{\left(  1\right)  }-b_{t}^{\left(
2\right)  }\right\vert \mathrm{d}t\\
&  +e^{-C_{p}^{\ast}A_{t}}p\left\vert Y_{t}^{\left(  1\right)  }%
-Y_{t}^{\left(  2\right)  }\right\vert ^{p-2}\left\langle Y_{t}^{\left(
1\right)  }-Y_{t}^{\left(  2\right)  },\left(  \sigma_{t}^{\left(  1\right)
}-\sigma_{t}^{\left(  2\right)  }\right)  \cdot\mathrm{d}W_{t}\right\rangle \\
&  +e^{-C_{p}^{\ast}A_{t}}C_{p}^{\ast}\left\vert Y_{t}^{\left(  1\right)
}-Y_{t}^{\left(  2\right)  }\right\vert ^{p-2}\left\Vert \sigma_{t}^{\left(
1\right)  }-\sigma_{t}^{\left(  2\right)  }\right\Vert ^{2}\mathrm{d}t\\
&  -C_{p}^{\ast}e^{-C_{p}^{\ast}A_{t}}\left\vert Y_{t}^{\left(  1\right)
}-Y_{t}^{\left(  2\right)  }\right\vert ^{p}dA_{t}.
\end{align*}
Since
\[
e^{-C_{p}^{\ast}A_{t}}C_{p}^{\ast}\left\vert Y_{t}^{\left(  1\right)  }%
-Y_{t}^{\left(  2\right)  }\right\vert ^{p-2}\left\Vert \sigma_{t}^{\left(
1\right)  }-\sigma_{t}^{\left(  2\right)  }\right\Vert ^{2}\mathrm{d}%
t-C_{p}^{\ast}e^{-C_{p}^{\ast}A_{t}}\left\vert Y_{t}^{\left(  1\right)
}-Y_{t}^{\left(  2\right)  }\right\vert ^{p}dA_{t}=0
\]
The inequality simplifies to%
\begin{align*}
d\left(  e^{-C_{p}^{\ast}A_{t}}\left\vert Y_{t}^{\left(  1\right)  }%
-Y_{t}^{\left(  2\right)  }\right\vert ^{p}\right)   &  \leq e^{-C_{p}^{\ast
}A_{t}}p\left\vert Y_{t}^{\left(  1\right)  }-Y_{t}^{\left(  2\right)
}\right\vert ^{p-1}\left\vert b_{t}^{\left(  1\right)  }-b_{t}^{\left(
2\right)  }\right\vert \mathrm{d}t\\
&  +e^{-C_{p}^{\ast}A_{t}}p\left\vert Y_{t}^{\left(  1\right)  }%
-Y_{t}^{\left(  2\right)  }\right\vert ^{p-2}\left\langle Y_{t}^{\left(
1\right)  }-Y_{t}^{\left(  2\right)  },\left(  \sigma_{t}^{\left(  1\right)
}-\sigma_{t}^{\left(  2\right)  }\right)  \cdot\mathrm{d}W_{t}\right\rangle .
\end{align*}
The last term is a martingale:\ the processes $\sigma_{t}^{\left(  i\right)
}$ are bounded (recall that $\nabla U$ is bounded), $e^{-C_{p}^{\ast}A_{t}}$
is bounded by 1, and $\left\vert Y_{t}^{\left(  1\right)  }-Y_{t}^{\left(
2\right)  }\right\vert $ is integrable at any power, since it is smaller than
$\frac{3}{2}\left\vert X_{t}^{\left(  1\right)  }-X_{t}^{\left(  2\right)
}\right\vert $ (lemma \ref{lemma 2}) and we know that solutions of equation
(\ref{SDE}) are integrable to any power, see proposition
\ref{proposition powers} in the Appendix. Therefore, using also $\left\vert
Y_{0}^{\left(  1\right)  }-Y_{0}^{\left(  2\right)  }\right\vert \leq\frac
{3}{2}\left\vert x^{\left(  1\right)  }-x^{\left(  2\right)  }\right\vert $
(lemma \ref{lemma 2})%
\[
E\left[  e^{-C_{p}^{\ast}A_{t}}\left\vert Y_{t}^{\left(  1\right)  }%
-Y_{t}^{\left(  2\right)  }\right\vert ^{p}\right]  \leq C_{p}\left\vert
x^{\left(  1\right)  }-x^{\left(  2\right)  }\right\vert ^{p}+p\int_{0}%
^{t}E\left[  \left\vert Y_{s}^{\left(  1\right)  }-Y_{s}^{\left(  2\right)
}\right\vert ^{p-1}\left\vert b_{s}^{\left(  1,n\right)  }-b_{s}^{\left(
2,n\right)  }\right\vert \right]  \mathrm{d}s.
\]
Using again lemma \ref{lemma 2}, both in the first and last term, we get
\begin{equation}
E\hspace{-0,05cm}\left[  e^{-C_{p}^{\ast}A_{t}^{\left(  n\right)  }}\left\vert X_{t}^{\left(
1\right)  }\hspace{-0,05cm}-\hspace{-0,05cm}X_{t}^{\left(  2\right)  }\right\vert ^{p}\right]  \leq
C_{p}\left\vert x^{\left(  1\right)  }\hspace{-0,05cm}-x^{\left(  2\right)  }\right\vert
^{p}\hspace{-0,05cm}+C_{p}\hspace{-0,05cm}\int_{0}^{T}\hspace{-0,25cm}E\hspace{-0,05cm}\left[  \left\vert X_{s}^{\left(  1\right)  }\hspace{-0,05cm}%
-\hspace{-0,05cm}X_{s}^{\left(  2\right)  }\right\vert ^{p-1}\left\vert b_{s}^{\left(
1,n\right)  }-b_{s}^{\left(  2,n\right)  }\right\vert \right] \hspace{-0,05cm} \mathrm{d}s.
\label{intermediate}%
\end{equation}

\begin{lemma}
\label{lemma 3}For every $\alpha,\beta\geq1$,%
\[
\lim_{n\rightarrow\infty}E\left[  \left(  \int_{0}^{T}\left\vert
X_{s}^{\left(  1\right)  }-X_{s}^{\left(  2\right)  }\right\vert ^{\alpha
}\left\vert b_{s}^{\left(  1,n\right)  }-b_{s}^{\left(  2,n\right)
}\right\vert \mathrm{d}s\right)  ^{\beta}\right]  =0.
\]

\end{lemma}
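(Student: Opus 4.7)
The plan is to separate the two factors in the integrand, control the path difference by the fact that solutions have moments of every order, and then reduce the remaining integral involving $b_s^{(i,n)}$ to a Krylov-type estimate which pairs naturally with the geometric decay $\|\mathcal{T}^{n+1}(b)\|_{L_p^q(T)} \le C 2^{-(n+1)}$ provided by Lemma \ref{lemma 2}.

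Concretely, first I would bound $|X_s^{(1)}-X_s^{(2)}|^{\alpha}$ by $\sup_{s\in[0,T]}|X_s^{(1)}-X_s^{(2)}|^{\alpha}$ and pull it out of the time integral; then raise to the power $\beta$ and apply the Cauchy--Schwarz inequality in expectation to split into
\[
E\!\left[\sup_{s\in[0,T]}|X_s^{(1)}-X_s^{(2)}|^{2\alpha\beta}\right]^{1/2}\;E\!\left[\left(\int_0^T|b_s^{(1,n)}-b_s^{(2,n)}|\,\mathrm{d}s\right)^{2\beta}\right]^{1/2}.
\]
The first factor is bounded uniformly in $n$ since each solution has moments of any order by Proposition \ref{proposition powers} in the appendix. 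For the second, I would dominate by $\int_0^T|b^{(n)}(s,X_s^{(1)})|\,\mathrm{d}s+\int_0^T|b^{(n)}(s,X_s^{(2)})|\,\mathrm{d}s$ and treat each summand separately.

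The crucial ingredient is the Krylov-type bound: for any $g\in L_p^q(T)$ with $p,q$ satisfying (\ref{pq}) and any solution $X$ of (\ref{SDE}),
\[
E\!\left[\left(\int_0^T|g(s,X_s)|\,\mathrm{d}s\right)^{m}\right]\le C_m\,\|g\|_{L_p^q(T)}^{m},\qquad m\ge 1,
\]
which follows from the basic Krylov estimate (available after the Girsanov change of measure used for weak existence, cf.\ Proposition \ref{teo weak existence}) combined with Khasminskii's lemma. Applying this with $g=b^{(n)}=\mathcal{T}^{n+1}(b)$ and $m=2\beta$, we get
\[
E\!\left[\left(\int_0^T|b^{(n)}(s,X_s^{(i)})|\,\mathrm{d}s\right)^{2\beta}\right]\le C\,\|\mathcal{T}^{n+1}(b)\|_{L_p^q(T)}^{2\beta}\le C\,2^{-2\beta(n+1)}
\]
by (\ref{same bounds}). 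Feeding this into Cauchy--Schwarz yields a bound of order $2^{-\beta(n+1)}$, which tends to $0$ as $n\to\infty$, proving the lemma.

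The main obstacle is the Krylov-type estimate for moments of $\int_0^T|g(s,X_s)|\,\mathrm{d}s$; this is a standard but non-trivial tool that is \emph{the} ingredient forcing the condition (\ref{pq}) on the weak-existence side of the argument. Everything else (Cauchy--Schwarz, the geometric decay of $\|\mathcal{T}^{k}(b)\|_{L_p^q(T)}$ from Lemma \ref{lemma 2}, and the moment bound on the solutions) is either already in hand or harmless.
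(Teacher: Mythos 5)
Your proposal is correct and follows essentially the same route as the paper: Cauchy--Schwarz to decouple the path-difference factor from the drift factor, uniform moment bounds on the solutions for the first, and a Girsanov--Khasminskii argument paired with the geometric decay $\Vert \mathcal{T}^{n+1}(b)\Vert_{L_p^q(T)}\le C2^{-(n+1)}$ for the second. The paper makes your ``Krylov-type estimate'' explicit by writing $b^{(n)}=2^{-(n+1)}f_n$ with $f_n$ equibounded in $L_p^q(T)$, transferring $E[(\int_0^T|f_n(s,X_s^{(i)})|^2\,\mathrm{d}s)^\beta]$ to the Brownian motion via the Girsanov identity (\ref{formula Gir 2}), and invoking the exponential moment bounds of Corollary \ref{corollary appendix B} --- exactly the mechanism you sketch. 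One small inaccuracy: you pull $\sup_{s\in[0,T]}|X_s^{(1)}-X_s^{(2)}|^{\alpha}$ out of the time integral and justify its moments by Proposition \ref{proposition powers}, which only gives $\sup_t E[|X_t|^N]<\infty$, not $E[\sup_t|X_t|^N]<\infty$; the paper sidesteps this by keeping the difference inside the integral, so that Jensen's inequality plus $\sup_t E[\cdot]$ suffices. If you prefer the sup, you need the extra (true but unstated) observation that $\sup_t|X_t|\le|x|+\int_0^T|b(s,X_s)|\,\mathrm{d}s+\sup_t|W_t|$ has moments of all orders via (\ref{Novikov b(X)}).
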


\begin{proof}
We have%
\begin{align*}
&  E\left[  \left(  \int_{0}^{T}\left\vert X_{s}^{\left(  1\right)  }%
-X_{s}^{\left(  2\right)  }\right\vert ^{\alpha}\left\vert b_{s}^{\left(
1,n\right)  }-b_{s}^{\left(  2,n\right)  }\right\vert \mathrm{d}s\right)
^{\beta}\right] \\
&  \leq E\left[  \left(  \int_{0}^{T}\left\vert X_{s}^{\left(  1\right)
}-X_{s}^{\left(  2\right)  }\right\vert ^{2\alpha}\mathrm{d}s\right)  ^{\beta
}\right]  ^{1/2}E\left[  \left(  \int_{0}^{T}\left\vert b_{s}^{\left(
1,n\right)  }-b_{s}^{\left(  2,n\right)  }\right\vert ^{2}\mathrm{d}s\right)
^{\beta}\right]  ^{1/2}.
\end{align*}
The first term is bounded since $E\left[  \int_{0}^{T}\left\vert
X_{s}^{\left(  i\right)  }\right\vert ^{N}\mathrm{d}s\right]  <\infty$ for
each $N>0$, $i=1,2$, see proposition \ref{proposition powers} in the Appendix.
Let us prove that the second term converges to zero. For each $i=1,2$, we have%
\[
E\left[  \left(  \int_{0}^{T}\left\vert 2^{n+1}b_{s}^{\left(  i,n\right)
}\right\vert ^{2}\mathrm{d}s\right)  ^{\beta}\right]  =E\left[  \left(
\int_{0}^{T}\left\vert f_{n}\left(  s,X_{s}^{\left(  i\right)  }\right)
\right\vert ^{2}\mathrm{d}s\right)  ^{\beta}\right]
\]
where
\[
f_{n}:=2^{n+1}\mathcal{T}^{n+1}\left(  b\right)
\]
are equibounded in $L_{p}^{q}\left(  T\right)  $ by lemma \ref{lemma 2}. From
Girsanov formula (\ref{formula Gir 2}) of the Appendix we have%
\begin{align*}
&\hspace{-1,4cm}  E\left[  \left(  \int_{0}^{T}\left\vert f_{n}\left(  s,X_{s}^{\left(
i\right)  }\right)  \right\vert ^{2}\mathrm{d}s\right)  ^{\beta}\right] \\
&\hspace{-1,4cm}  =E\left[  \left(  \int_{0}^{T}\left\vert f_{n}\left(  s,x^{\left(
i\right)  }+W_{s}\right)  \right\vert ^{2}\mathrm{d}s\right)  ^{\beta
}e^{\,\int_{0}^{T}b(s,x^{\left(  i\right)  }+W_{s})\;\mathrm{d}W_{s}%
-1/2\int_{0}^{T}\left\vert b(s,x^{\left(  i\right)  }+W_{s})\right\vert
^{2}\mathrm{d}s}\right]  .
\end{align*}
This is equal to%
\begin{align*}
&  =E\left[  \left(  \int_{0}^{T}\left\vert f_{n}\left(  s,x+W_{s}\right)
\right\vert ^{2}\mathrm{d}s\right)  ^{\beta}e^{\int_{0}^{T}b(s,x+W_{s}%
)\,\mathrm{d}W_{s}-\frac{2}{2}\int_{0}^{T}\left\vert b(s,x+W_{s})\right\vert
^{2}\mathrm{d}s}e^{\frac{\left(  2-1\right)  }{2}\int_{0}^{T}\left\vert
b(s,x+W_{s})\right\vert ^{2}\mathrm{d}s}\right] \\
&  \leq E\left[  \left(  \int_{0}^{T}\left\vert f_{n}\left(  s,x+W_{s}\right)
\right\vert ^{2}\mathrm{d}s\right)  ^{2\beta}e^{\left(  2-1\right)  \int
_{0}^{T}\left\vert b(s,x+W_{s})\right\vert ^{2}\mathrm{d}s}\right]  ^{1/2}\\
&  \leq E\left[  \left(  \int_{0}^{T}\left\vert f_{n}\left(  s,x+W_{s}\right)
\right\vert ^{2}\mathrm{d}s\right)  ^{4\beta}\right]  ^{1/4}E\left[
e^{2\int_{0}^{T}\left\vert b(s,x+W_{s})\right\vert ^{2}\mathrm{d}s}\right]
^{1/4}%
\end{align*}
where we have used
\[
E\left[  e^{\int_{0}^{T}2b(s,x+W_{s})\,\mathrm{d}W_{s}-\frac{1}{2}\int_{0}%
^{T}\left\vert 2b(s,x+W_{s})\right\vert ^{2}\mathrm{d}s}\right]  =1.
\]
Both factors of the last inequality are bounded, by the exponential moment
estimates of corollary \ref{corollary appendix B}. Therefore we can find a
constant $K_{\beta}$ independent of $n$, such that%
\begin{equation}
E\left[  \left(  \int_{0}^{T}\left\vert f_{n}\left(  s,X_{s}^{\left(
i\right)  }\right)  \right\vert ^{2}\mathrm{d}s\right)  ^{\beta}\right]  \leq
K_{\beta} \label{ineq 1}%
\end{equation}
which implies $E\left[  \left(  \int_{0}^{T}\left\vert b_{s}^{\left(
i,n\right)  }\right\vert ^{2}\mathrm{d}s\right)  ^{\beta}\right]  \leq
K_{\beta}\frac{1}{\left(  2^{n+1}\right)  ^{2\beta}}$. The proof is complete.
\end{proof}

\bigskip From (\ref{intermediate}) and lemma \ref{lemma 3} we get%
\[
\underset{n\rightarrow\infty}{\lim\sup}\sup_{t\in\left[  0,T\right]  }E\left[
e^{-C_{p}^{\ast}A_{t}^{\left(  n\right)  }}\left\vert X_{t}^{\left(  1\right)
}-X_{t}^{\left(  2\right)  }\right\vert ^{p}\right]  \leq C_{p}\left\vert
x^{\left(  1\right)  }-x^{\left(  2\right)  }\right\vert ^{p}.
\]
But we have
\begin{align*}
E\left[  \left\vert X_{t}^{\left(  1\right)  }-X_{t}^{\left(  2\right)
}\right\vert ^{p/2}\right]   &  =E\left[  e^{C_{p}^{\ast}A_{t}^{\left(
n\right)  }/2}e^{-C_{p}^{\ast}A_{t}^{\left(  n\right)  }/2}\left\vert
X_{t}^{\left(  1\right)  }-X_{t}^{\left(  2\right)  }\right\vert ^{p/2}\right]
\\
&  \leq E\left[  e^{C_{p}^{\ast}A_{t}^{\left(  n\right)  }}\right]
^{1/2}E\left[  e^{-C_{p}^{\ast}A_{t}^{\left(  n\right)  }}\left\vert
X_{t}^{\left(  1\right)  }-X_{t}^{\left(  2\right)  }\right\vert ^{p}\right]
^{1/2}\\
&  \leq E\left[  e^{C_{p}^{\ast}A_{t}^{\left(  n\right)  }}\right]
^{1/2}C_{p}\left\vert x^{\left(  1\right)  }-x^{\left(  2\right)  }\right\vert
^{p/2}.
\end{align*}
From lemma \ref{lemma 4}, $E\left[  e^{C_{p}^{\ast}A_{t}^{\left(  n\right)  }%
}\right]  $ is uniformly bounded, so we include it into the constant and get
(renaming $p$)%
\[
\underset{n\rightarrow\infty}{\lim\sup}\sup_{t\in\left[  0,T\right]  }E\left[
\left\vert X_{t}^{\left(  1\right)  }-X_{t}^{\left(  2\right)  }\right\vert
^{p/2}\right]  \leq C_{p}\left\vert x^{\left(  1\right)  }-x^{\left(
2\right)  }\right\vert ^{p/2}.
\]
But now the left-hand-side is independent of $n$. We have proved the following
result, of independent interest. It is proved here for small $T$, but by
iteration or by the trick described in remark \ref{remark lambda}, it holds
true on the original time interval $\left[  0,T\right]  $.

\begin{proposition}
\label{proposition estimate}
\begin{equation}
\sup_{t\in\left[  0,T\right]  }E\left[  \left\vert X_{t}^{\left(  1\right)
}-X_{t}^{\left(  2\right)  }\right\vert ^{p}\right]  \leq C_{p}\left\vert
x^{\left(  1\right)  }-x^{\left(  2\right)  }\right\vert ^{p}.
\label{moment estimate for X}%
\end{equation}

\end{proposition}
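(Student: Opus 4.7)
The plan is to work with the transformed processes $Y_{t}^{(i,n)} = X_{t}^{(i)} + U^{(n)}(t,X_{t}^{(i)})$ introduced above, for which Lemma \ref{lemma 2} guarantees that $|Y^{(1,n)} - Y^{(2,n)}|$ is comparable to $|X^{(1)} - X^{(2)}|$. In these variables the dynamics read $dY_{t}^{(i)} = b_{t}^{(i,n)}\,dt + (\sigma_{t}^{(i,n)} + I)\cdot dW_{t}$, with drift having small $L_{p}^{q}$-norm (of order $2^{-n}$) and diffusion coefficient uniformly bounded in $L^{\infty}$. It therefore suffices to estimate $E[|Y_{t}^{(1,n)} - Y_{t}^{(2,n)}|^{p}]$ by $C_{p}|x^{(1)} - x^{(2)}|^{p}$ for $T$ small.

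I would then apply It\^{o}'s formula to $|Y_{t}^{(1)} - Y_{t}^{(2)}|^{p}$. The quadratic-variation contribution $C_{p}^{\ast} |Y^{(1)} - Y^{(2)}|^{p-2}\|\sigma^{(1)} - \sigma^{(2)}\|^{2}$ is the obstacle, because $D^{2}U^{(n)}$ lies only in $L_{p}^{q}$, not in $L^{\infty}$, so we cannot pointwise dominate $\|\sigma^{(1)} - \sigma^{(2)}\|$ by $\|D^{2}U^{(n)}\|_{\infty}|Y^{(1)} - Y^{(2)}|$. Following \cite{Ver}, I would introduce the increasing process
\[
A_{t}^{(n)} := \int_{0}^{t} \frac{\|\sigma_{s}^{(1,n)} - \sigma_{s}^{(2,n)}\|^{2}}{|Y_{s}^{(1,n)} - Y_{s}^{(2,n)}|^{2}}\, 1_{\{Y_{s}^{(1,n)} \neq Y_{s}^{(2,n)}\}}\,ds
\]
and, as in \cite{Schmal}, \cite{DD03}, consider $e^{-C_{p}^{\ast} A_{t}^{(n)}} |Y_{t}^{(1)} - Y_{t}^{(2)}|^{p}$ rather than the bare $p$-th power. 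The extra increment $-C_{p}^{\ast} e^{-C_{p}^{\ast} A_{t}}|Y^{(1)} - Y^{(2)}|^{p}\, dA_{t}$ produced by the exponential weight is precisely designed to cancel the bad quadratic-variation term.

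Taking expectations, and using the boundedness of $\sigma^{(i)}$ together with the appendix moment bounds on $X^{(i)}$ (so the stochastic integral is a true martingale), I would reach
\[
E[e^{-C_{p}^{\ast} A_{t}^{(n)}} |X_{t}^{(1)} - X_{t}^{(2)}|^{p}] \leq C_{p} |x^{(1)} - x^{(2)}|^{p} + C_{p} \int_{0}^{T} E[|X_{s}^{(1)} - X_{s}^{(2)}|^{p-1} |b_{s}^{(1,n)} - b_{s}^{(2,n)}|]\,ds.
\]
The decay $\|\mathcal{T}^{n+1}(b)\|_{L_{p}^{q}} \leq C\,2^{-n}$ from Lemma \ref{lemma 2}, combined with Lemma \ref{lemma 3}, drives the integral to zero as $n \to \infty$. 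To remove the weight $e^{-C_{p}^{\ast} A_{t}^{(n)}}$, I would split $|X_{t}^{(1)} - X_{t}^{(2)}|^{p/2} = e^{C_{p}^{\ast} A_{t}^{(n)}/2}\cdot e^{-C_{p}^{\ast} A_{t}^{(n)}/2}|X_{t}^{(1)} - X_{t}^{(2)}|^{p/2}$, apply Cauchy-Schwarz, and invoke Lemma \ref{lemma 4} to bound $E[\exp(C_{p}^{\ast} A_{t}^{(n)})]$ uniformly in $n$. Renaming $p$ then yields the claimed inequality on $[0,T_{0}]$.

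The main obstacle is precisely the lack of an $L^{\infty}$ bound on $D^{2}U^{(n)}$, which is what forces the exponential-weighting device in the first place; the linchpin of the scheme is Lemma \ref{lemma 4}'s uniform-in-$n$ exponential integrability of $A_{t}^{(n)}$. The extension from $[0,T_{0}]$ to the original $[0,T]$ follows either by iterating the estimate on consecutive subintervals of length $T_{0}$ (the constants from Lemma \ref{lemma 2} are translation invariant in time), or via the damping trick of Remark \ref{remark lambda}, where adding $\lambda u$ to the heat equation makes $\|\nabla u\|_{L^{\infty}}$ small uniformly on $[0,T]$ for $\lambda$ large.
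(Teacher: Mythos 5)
Your proposal follows essentially the same route as the paper's own proof: the Zvonkin-type transformation to $Y_{t}^{(i,n)}$, It\^{o}'s formula on $|Y_{t}^{(1)}-Y_{t}^{(2)}|^{p}$, the Veretennikov increasing process $A_{t}^{(n)}$ with the exponential weight $e^{-C_{p}^{\ast}A_{t}^{(n)}}$ cancelling the quadratic-variation term, Lemma \ref{lemma 3} to kill the drift contribution as $n\to\infty$, and Cauchy--Schwarz with Lemma \ref{lemma 4} to remove the weight. The argument is correct and matches the paper step for step, including the final passage from $[0,T_{0}]$ to $[0,T]$.
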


Let us stress that, in our opinion, this proposition is a remarkable step
forward with respect to what was known before for equation (\ref{SDE}) under
$L_{p}^{q}$-drift. In a sense, the rest are more or less classical details.

Let us improve the proposition to an estimate for $E\left[  \sup_{t\in\left[
0,T\right]  }\left\vert X_{t}^{\left(  1\right)  }-X_{t}^{\left(  2\right)
}\right\vert ^{p}\right]  $. We may use the inequality proved above%
\begin{align*}
e^{-C_{p}^{\ast}A_{t}}\left\vert Y_{t}^{\left(  1\right)  }-Y_{t}^{\left(
2\right)  }\right\vert ^{p}  &  \leq\frac{3}{2}\left\vert x^{\left(  1\right)
}-x^{\left(  2\right)  }\right\vert ^{p} +p\int_{0}^{T}\left\vert Y_{s}^{\left(  1\right)  }-Y_{s}^{\left(
2\right)  }\right\vert ^{p-1}\left\vert b_{s}^{\left(  1\right)  }%
-b_{s}^{\left(  2\right)  }\right\vert \mathrm{d}t\\
&  +p\int_{0}^{t}\left\vert Y_{s}^{\left(  1\right)  }-Y_{s}^{\left(
2\right)  }\right\vert ^{p-2}\left\langle Y_{s}^{\left(  1\right)  }%
-Y_{s}^{\left(  2\right)  },\left(  \sigma_{s}^{\left(  1\right)  }-\sigma
_{s}^{\left(  2\right)  }\right)  \cdot\mathrm{d}W_{t}\right\rangle
\end{align*}
square it
\begin{align*}
e^{-2C_{p}^{\ast}A_{t}}\left\vert Y_{t}^{\left(  1\right)  }-Y_{t}^{\left(
2\right)  }\right\vert ^{2p}  &  \leq C\left\vert x^{\left(  1\right)
}-x^{\left(  2\right)  }\right\vert ^{2p}  +C_{p}\left(  \int_{0}^{T}\left\vert Y_{s}^{\left(  1\right)  }%
-Y_{s}^{\left(  2\right)  }\right\vert ^{p-1}\left\vert b_{s}^{\left(
1\right)  }-b_{s}^{\left(  2\right)  }\right\vert \mathrm{d}t\right)  ^{2}\\
&  +C_{p}\left(  \int_{0}^{t}\left\vert Y_{s}^{\left(  1\right)  }%
-Y_{s}^{\left(  2\right)  }\right\vert ^{p-2}\left\langle Y_{s}^{\left(
1\right)  }-Y_{s}^{\left(  2\right)  },\left(  \sigma_{s}^{\left(  1\right)
}-\sigma_{s}^{\left(  2\right)  }\right)  \cdot\mathrm{d}W_{t}\right\rangle
\right)  ^{2}%
\end{align*}
and apply Doob's inequality, and lemma \ref{lemma 2}, to get%
\begin{align*}
&  E\left[  \sup_{t\in\left[  0,T\right]  }\left(  e^{-2C_{p}^{\ast}%
A_{t}^{\left(  n\right)  }}\left\vert X_{t}^{\left(  1\right)  }%
-X_{t}^{\left(  2\right)  }\right\vert ^{2p}\right)  \right] \\
& \hspace{2cm} \leq2^{2p}E\left[  \sup_{t\in\left[  0,T\right]  }\left(  e^{-2C_{p}^{\ast
}A_{t}^{\left(  n\right)  }}\left\vert Y_{t}^{\left(  1\right)  }%
-Y_{t}^{\left(  2\right)  }\right\vert ^{2p}\right)  \right] \\
& \hspace{2cm}  \leq C_{p}\left\vert x^{\left(  1\right)  }-x^{\left(  2\right)
}\right\vert ^{2p}  +C_{p}E\left[  \left(  \int_{0}^{T}\left\vert Y_{s}^{\left(  1\right)
}-Y_{s}^{\left(  2\right)  }\right\vert ^{p-1}\left\vert b_{s}^{\left(
1\right)  }-b_{s}^{\left(  2\right)  }\right\vert \mathrm{d}t\right)
^{2}\right] \\
& \hspace{3cm} +C_{p}E\left[  \int_{0}^{T}\left\vert Y_{s}^{\left(  1\right)  }%
-Y_{s}^{\left(  2\right)  }\right\vert ^{2\left(  p-1\right)  }\left\Vert
\sigma_{s}^{\left(  1\right)  }-\sigma_{s}^{\left(  2\right)  }\right\Vert
^{2}\mathrm{d}s\right]  .
\end{align*}
One one side we have%
\begin{align*}
&  E\left[  \left(  \int_{0}^{T}\left\vert Y_{s}^{\left(  1,n\right)  }%
-Y_{s}^{\left(  2,n\right)  }\right\vert ^{p-1}\left\vert b_{s}^{\left(
1,n\right)  }-b_{s}^{\left(  2,n\right)  }\right\vert \mathrm{d}t\right)
^{2}\right] \\
& \hspace{3cm}  \leq C_{p}E\left[  \left(  \int_{0}^{T}\left\vert X_{s}^{\left(  1\right)
}-X_{s}^{\left(  2\right)  }\right\vert ^{p-1}\left\vert b_{s}^{\left(
1,n\right)  }-b_{s}^{\left(  2,n\right)  }\right\vert \mathrm{d}t\right)
^{2}\right]
\end{align*}
which converges to zero as $n\rightarrow\infty$, by lemma \ref{lemma 3}. On
the other side, since by definition of $\sigma_{t}^{\left(  i,n\right)  }$ and
inequality (\ref{same bounds}) we have $\left\vert \sigma_{s}^{\left(
i\right)  }\right\vert \leq\frac{1}{2}$, one has the estimate%
\begin{align*}
  E\left[  \int_{0}^{T}\left\vert Y_{s}^{\left(  1\right)  }-Y_{s}^{\left(
2\right)  }\right\vert ^{2\left(  p-1\right)  }\left\Vert \sigma_{s}^{\left(
1\right)  }-\sigma_{s}^{\left(  2\right)  }\right\Vert ^{2}\mathrm{d}s\right]
& \leq CE\left[  \int_{0}^{T}\left\vert Y_{s}^{\left(  1\right)  }%
-Y_{s}^{\left(  2\right)  }\right\vert ^{2\left(  p-1\right)  }\mathrm{d}%
s\right] \\
&  \leq C^{\prime}E\left[  \int_{0}^{T}\left\vert X_{s}^{\left(
1\right)  }-X_{s}^{\left(  2\right)  }\right\vert ^{2\left(  p-1\right)
}\mathrm{d}s\right] \\
&  \leq C_{p}\left\vert x^{\left(  1\right)  }-x^{\left(
2\right)  }\right\vert ^{2\left(  p-1\right)  }%
\end{align*}
by means of (\ref{moment estimate for X}). Summarizing and taking the limit as
$n\rightarrow\infty$ we have%
\[
\underset{n\rightarrow\infty}{\lim\sup}E\left[  \sup_{t\in\left[  0,T\right]
}\left(  e^{-2C_{p}^{\ast}A_{t}^{\left(  n\right)  }}\left\vert X_{t}^{\left(
1\right)  }-X_{t}^{\left(  2\right)  }\right\vert ^{2p}\right)  \right]  \leq
C\left\vert x^{\left(  1\right)  }-x^{\left(  2\right)  }\right\vert
^{2p}+C_{p}\left\vert x^{\left(  1\right)  }-x^{\left(  2\right)  }\right\vert
^{2\left(  p-1\right)  }.
\]
Moreover,%
\[
E\left[  e^{-2C_{p}^{\ast}A_{T}^{\left(  n\right)  }}\sup_{t\in\left[
0,T\right]  }\left(  \left\vert X_{t}^{\left(  1\right)  }-X_{t}^{\left(
2\right)  }\right\vert ^{2p}\right)  \right]  \leq E\left[  \sup_{t\in\left[
0,T\right]  }\left(  e^{-2C_{p}^{\ast}A_{t}^{\left(  n\right)  }}\left\vert
X_{t}^{\left(  1\right)  }-X_{t}^{\left(  2\right)  }\right\vert ^{2p}\right)
\right]
\]
and finally%
\begin{align*}
E\left[  \sup_{t\in\left[  0,T\right]  }\left(  \left\vert X_{t}^{\left(
1\right)  }-X_{t}^{\left(  2\right)  }\right\vert ^{p}\right)  \right]   &
=E\left[  e^{C_{p}^{\ast}A_{T}^{\left(  n\right)  }}e^{-C_{p}^{\ast}%
A_{T}^{\left(  n\right)  }}\sup_{t\in\left[  0,T\right]  }\left(  \left\vert
X_{t}^{\left(  1\right)  }-X_{t}^{\left(  2\right)  }\right\vert ^{p}\right)
\right] \\
&  \leq E\left[  e^{2C_{p}^{\ast}A_{T}^{\left(  n\right)  }}\right]
^{1/2}E\left[  e^{-2C_{p}^{\ast}A_{T}^{\left(  n\right)  }}\sup_{t\in\left[
0,T\right]  }\left(  \left\vert X_{t}^{\left(  1\right)  }-X_{t}^{\left(
2\right)  }\right\vert ^{2p}\right)  \right]  ^{1/2}.
\end{align*}
By lemma \ref{lemma 4} below, the previous inequalities give us%
\[
E\left[  \sup_{t\in\left[  0,T\right]  }\left(  \left\vert X_{t}^{\left(
1\right)  }-X_{t}^{\left(  2\right)  }\right\vert ^{p}\right)  \right]  \leq
C_{p}\left(  \left\vert x^{\left(  1\right)  }-x^{\left(  2\right)
}\right\vert ^{2p}+\left\vert x^{\left(  1\right)  }-x^{\left(  2\right)
}\right\vert ^{2\left(  p-1\right)  }\right)  ^{1/2}.
\]
By Kolmogorov theorem, we deduce the pathwise properties of our main theorem.
To complete the proof we need the following exponential estimate. The $L^{1}%
$-integrability of an expression very similar to $A_{T}^{\left(  n\right)  }$
has been proved in \cite{KR05}.

\begin{lemma}
\label{lemma 4} For any $k\in\mathbb{R}$ there is a constant $C_{k}>0$ such
that
\begin{equation}
E\left[  e^{kA_{T}^{\left(  n\right)  }}\right]  \leq C_{k}\,
\end{equation}
uniformly in $n\in\mathbb{N}$.
\end{lemma}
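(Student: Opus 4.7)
I would prove that $A_T^{(n)}$ is dominated pathwise by a Krylov-type functional of a nonnegative field $V_n\in L_{p/2}^{q/2}(T)$ whose $L_{p/2}^{q/2}$-norm is uniformly bounded in $n$, and then derive the exponential integrability via Khas'minskii's iteration combined with Girsanov, paralleling the proof of lemma \ref{lemma 3}.

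Concretely, for a.e.\ $s$ the function $\nabla U^{(n)}(s,\cdot)$ lies in $W^{1,p}(\mathbb{R}^d)$ with $p>1$, so the classical pointwise bound for Sobolev functions gives
\[
|\nabla U^{(n)}(s,x)-\nabla U^{(n)}(s,y)|\leq C|x-y|\left(M|D^{2}U^{(n)}(s,\cdot)|(x)+M|D^{2}U^{(n)}(s,\cdot)|(y)\right),
\]
where $M$ denotes the Hardy-Littlewood maximal operator in the spatial variable. Taking $x=X_s^{(1)}$, $y=X_s^{(2)}$, squaring, and combining with the lower bound $|Y_s^{(1,n)}-Y_s^{(2,n)}|\geq\frac{1}{2}|X_s^{(1)}-X_s^{(2)}|$ of lemma \ref{lemma 2}, one obtains
\[
A_T^{(n)}\leq C\sum_{i=1}^{2}\int_0^T V_n(s,X_s^{(i)})\,\mathrm{d}s,\qquad V_n(s,x):=\left(M|D^2 U^{(n)}(s,\cdot)|(x)\right)^2.
\]

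The $L^p$-continuity of $M$, together with (\ref{maximal}) and lemma \ref{lemma 2}, yields the key uniform bound $\|V_n\|_{L_{p/2}^{q/2}(T)}\leq C\|D^2 U^{(n)}\|_{L_p^q(T)}^{2}\leq C'$. Crucially, the Prodi-Serrin condition (\ref{pq}) is exactly equivalent to $d/(p/2)+2/(q/2)<2$, which is precisely the regime in which the Krylov estimate for Brownian motion applies to $V_n$. Hence, for each $\lambda>0$ one can partition $[0,T]$ into finitely many subintervals $[t_j,t_{j+1}]$, with number of pieces depending on $\lambda$ and $C'$ but not on $n$, on which $\lambda\sup_x E[\int_{t_j}^{t_{j+1}} V_n(s,x+W_s)\,\mathrm{d}s]\leq\frac12$. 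Khas'minskii's iteration then gives $\sup_x\sup_n E[\exp(\lambda\int_0^T V_n(s,x+W_s)\,\mathrm{d}s)]<\infty$. Finally, Girsanov combined with Cauchy-Schwarz (as in lemma \ref{lemma 3}) and the uniform bound on the Radon-Nikodym density provided by corollary \ref{corollary appendix B} transfers this estimate from $x^{(i)}+W_s$ to $X_s^{(i)}$; a further application of Hölder to combine $i=1,2$ concludes.

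The main obstacle is a technical measurability point: the Sobolev pointwise bound holds only outside a Lebesgue-null spatial set, while $X_s^{(i)}$ is a random point. I would resolve this by first mollifying $U^{(n)}$ in space, applying the inequality to the smooth approximants (where it holds everywhere), and then passing to the limit via $L_p^q$-convergence of the second derivatives; alternatively, by noting that Girsanov implies the laws of $X_s^{(i)}$ are absolutely continuous with respect to Lebesgue measure, so spatial null sets are visited with probability zero.
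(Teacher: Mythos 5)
Your argument is correct, but it takes a genuinely different route from the paper's. The paper also first reduces the quotient to one with $|X_s^{(1)}-X_s^{(2)}|^2$ in the denominator via lemma \ref{lemma 2}, but it then approximates $\nabla U^{(n)}$ by smooth compactly supported functions $f$ in $L^{q}\left(0,T;W^{1,p}\right)$ with norm bounded by a fixed $R$, bounds the difference quotient for smooth $f$ by $\int_{0}^{1}\|\nabla f(s,rX_s^{(1)}+(1-r)X_s^{(2)})\|^{2}\,\mathrm{d}r$, pulls the $\mathrm{d}r$-integral out of the exponential by Jensen, and then observes that the interpolated process $X^{(r)}=rX^{(1)}+(1-r)X^{(2)}$ solves an SDE with drift $b^{(r)}_s=rb(s,X_s^{(1)})+(1-r)b(s,X_s^{(2)})$ driven by the same $W$; after checking Novikov for $b^{(r)}$, Girsanov turns $X^{(r)}$ into a Brownian motion and corollary \ref{corollary appendix B} finishes. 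You instead replace the interpolation step by the Hardy--Littlewood maximal-function inequality for Sobolev functions, which lets you dominate $A_T^{(n)}$ by Krylov functionals of $V_n=(M|D^{2}U^{(n)}|)^{2}$ evaluated along the two original solutions only; you then need Girsanov only for $X^{(1)}$ and $X^{(2)}$ separately, not for the interpolated process. Your route buys a cleaner probabilistic structure (no auxiliary measure $Q^{(r)}$, no Novikov check for the interpolated drift, no Jensen-in-$r$ step) at the price of the strong $(p,p)$ maximal inequality (available since $p>d>1$ under (\ref{pq})) and the null-set issue, which you correctly dispose of via absolute continuity of the laws of $X_s^{(i)}$ (or mollification). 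The paper's route stays entirely within the elementary toolbox it advertises (heat-equation estimates, Girsanov, Khas'minskii) but needs the slightly delicate observation that convex combinations of solutions are again Girsanov-transformable. Your uniform bound $\|V_n\|_{L_{p/2}^{q/2}}\leq C$ and the identification of $d/(p/2)+2/(q/2)<2$ as the relevant Krylov regime match remark \ref{oss pq-p'q'} exactly, so the quantitative conclusion is the same and uniform in $n$, as required.
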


\vspace{0,2cm}

\textit{Proof: }For $Y_{s}^{(1)}\neq Y_{s}^{(2)}$, we also have $X_{s}%
^{(1)}\neq X_{s}^{(2)}$, by lemma \ref{lemma 2} (and vice versa, so the
functions $1_{\left\{  Y_{s}^{(1)}\neq Y_{s}^{(2)}\right\}  }$ and
$1_{\left\{  X_{s}^{(1)}\neq X_{s}^{(2)}\right\}  }$ coincide), so we may also
write%
\begin{align*}
\frac{\left\Vert \sigma_{s}^{\left(  1\right)  }-\sigma_{s}^{\left(  2\right)
}\right\Vert ^{2}}{\left\vert Y_{s}^{\left(  1\right)  }-Y_{s}^{\left(
2\right)  }\right\vert ^{2}}  &  =\frac{\left\Vert \nabla U^{\left(  n\right)
}\left(  s,X_{s}^{\left(  1\right)  }\right)  -\nabla U^{\left(  n\right)
}\left(  s,X_{s}^{\left(  2\right)  }\right)  \right\Vert ^{2}}{\left\vert
X_{s}^{(1)}-X_{s}^{(2)}\right\vert ^{2}}\frac{\left\vert X_{s}^{(1)}%
-X_{s}^{(2)}\right\vert ^{2}}{\left\vert Y_{s}^{\left(  1\right)  }%
-Y_{s}^{\left(  2\right)  }\right\vert ^{2}}\\
&  \leq2\frac{\left\Vert \nabla U^{\left(  n\right)  }\left(  s,X_{s}^{\left(
1\right)  }\right)  -\nabla U^{\left(  n\right)  }\left(  s,X_{s}^{\left(
2\right)  }\right)  \right\Vert ^{2}}{\left\vert X_{s}^{(1)}-X_{s}%
^{(2)}\right\vert ^{2}}%
\end{align*}
where we have used again lemma \ref{lemma 2}. Thus it is sufficient to prove
that%
\[
E\left[  \exp\left(  k\int_{0}^{T}\frac{\left\Vert \nabla U^{\left(  n\right)
}\left(  s,X_{s}^{\left(  1\right)  }\right)  -\nabla U^{\left(  n\right)
}\left(  s,X_{s}^{\left(  2\right)  }\right)  \right\Vert ^{2}}{\left\vert
X_{s}^{(1)}-X_{s}^{(2)}\right\vert ^{2}}\,1_{\left\{  X_{s}^{(1)}\neq
X_{s}^{(2)}\right\}  }\mathrm{d}s\right)  \right]  \leq C_{k}%
\]
where $C_{k}$ is a constant independent of $n$. Notice that $\nabla U^{\left(
n\right)  }$ are equibounded in \linebreak $L^{q}\left(0,T;W^{1,p}(\mathbb{R}
^{d})\right)$ by the last assertion of lemma \ref{lemma 2}. Thus, by the
density of $C_{c}^{\infty}\left(  [0,T]\times\mathbb{R}^{d}\right)  $ in
$L^{q}\left(0,T;W^{1,p}(  \mathbb{R}^{d}) \right )$, it is sufficient to prove
the following claim: for all smooth functions $f\in C_{c}^{\infty}\left(
[0,T]\times\mathbb{R}^{d}\right)  $ with $\Vert f\Vert_{L^{q}\left(0,T;W^{1,p}%
(  \mathbb{R}^{d}) \right )}\leq R$ we have
\begin{equation}
E\left[  \exp\left(  k\int_{0}^{T}\frac{\left\vert f\left(  s,X_{s}^{\left(
1\right)  }\right)  -f\left(  s,X_{s}^{\left(  2\right)  }\right)  \right\vert
^{2}}{\left\vert X_{s}^{(1)}-X_{s}^{(2)}\right\vert ^{2}}\,1_{\left\{
X_{s}^{(1)}\neq X_{s}^{(2)}\right\}  }\mathrm{d}s\right)  \right]  \leq
C_{k,R} \label{10 claim lemma e^A_t}%
\end{equation}
where $C_{k,R}$ depends only on $k$ and $R$.

For smooth functions $f$ we have%
\[
\frac{\left\vert f\left(  s,X_{s}^{\left(  1\right)  }\right)  -f\left(
s,X_{s}^{\left(  2\right)  }\right)  \right\vert ^{2}}{\left\vert X_{s}%
^{(1)}-X_{s}^{(2)}\right\vert ^{2}}\,1_{\left\{  X_{s}^{(1)}\neq X_{s}%
^{(2)}\right\}  }\leq\int_{0}^{1}\left\Vert \nabla f\left(  s,rX_{s}%
^{(1)}+(1-r)X_{s}^{(2)}\right)  \right\Vert ^{2}\mathrm{d}r.
\]
Using the convexity of the exponential function, we obtain that the left--hand
side of (\ref{10 claim lemma e^A_t}) is less than a constant times
\begin{equation}
\int_{0}^{1}E\left[  \exp\left(  k\int_{0}^{T}\left\Vert \nabla f\left(
s,rX_{s}^{(1)}+(1-r)X_{s}^{(2)}\right)  \right\Vert ^{2}\,\mathrm{d}s\right)
\right]  \,\mathrm{d}r. \label{int dr}%
\end{equation}
With the notations%
\begin{align*}
X_{s}^{\left(  r\right)  }  &  =rX_{s}^{(1)}+(1-r)X_{s}^{(2)},\qquad
x^{\left(  r\right)  }=rx^{\left(  1\right)  }+(1-r)x^{\left(  2\right)  }\\
b_{s}^{\left(  r\right)  }  &  =rb\left(  s,X_{s}^{(1)}\right)  +(1-r)b\left(
s,X_{s}^{(2)}\right)
\end{align*}
the process $X_{t}^{\left(  r\right)  }$ is given by
\[
X_{t}^{\left(  r\right)  }=x^{\left(  r\right)  }+\int_{0}^{t}b_{s}^{\left(
r\right)  }\,\mathrm{d}s+W_{t}.
\]
We have%
\[
E\left[  e^{\lambda\int_{0}^{T}\left\vert b_{t}^{\left(  r\right)
}\right\vert ^{2}\,\mathrm{d}t}\right]  \leq E\left[  e^{2\lambda r^{2}%
\int_{0}^{T}\left\vert b\left(  t,X_{t}^{(1)}\right)  \right\vert
^{2}\,\mathrm{d}t}e^{2\lambda(1-r)^{2}\int_{0}^{T}\left\vert b\left(
t,X_{t}^{(2)}\right)  \right\vert ^{2}\,\mathrm{d}t}\right]
\]
which is finite (by H\"{o}lder inequality) using the exponential estimates on
solutions of equation (\ref{SDE}) proved in the Appendix, see
(\ref{Novikov b(X)}). Hence Novikov condition is fulfilled;\ by Girsanov
theorem, $X_{t}^{\left(  r\right)  }$ is a Brownian motion from $x^{\left(
r\right)  }$, on $\left(  \Omega,F_{t},Q^{\left(  r\right)  }\right)  $ with
\[
\left.  \frac{dQ^{\left(  r\right)  }}{dP}\right\vert _{F_{T}}=\rho
_{T}^{\left(  r\right)  }:=\exp\left(  -\int_{0}^{T}b_{t}^{\left(  r\right)
}\ \cdot\mathrm{d}W_{t}-\frac{1}{2}\int_{0}^{T}\left\vert b_{t}^{\left(
r\right)  }\right\vert ^{2}\,\mathrm{d}t\right)  .
\]
Therefore we obtain (we indicate by superscripts the measure used in the
expected values)%
\begin{align*}
E^{P}\hspace{-0,05cm}\left[  \exp\hspace{-0,1cm}\left( \hspace{-0,05cm} k\hspace{-0,1cm}\int_{0}^{T}\hspace{-0,1cm} \left\Vert \nabla f\left(
s,X_{s}^{\left(  r\right)  }\right)  \right\Vert ^{2}\hspace{-0,1cm}\mathrm{d}s \hspace{-0,05cm}\right)
\right] \hspace{-0,1cm} &=\hspace{-0,1cm} E^{P}\hspace{-0,05cm}\left[  \left(  \rho_{T}^{\left(  r\right)  }\right)
^{-1/2}\hspace{-0,1cm}\left(  \rho_{T}^{\left(  r\right)  }\right)  ^{1/2}\hspace{-0,3cm}\exp\hspace{-0,1cm}\left( \hspace{-0,05cm}
k\hspace{-0,1cm}\int_{0}^{T}\hspace{-0,1cm} \left\Vert \nabla f\left(  s,X_{s}^{\left(  r\right)  }\right)
\right\Vert ^{2}\hspace{-0,1cm}\mathrm{d}s \hspace{-0,05cm}\right)  \right]
\\
&\leq CE^{P}\left[  \rho_{T}^{\left(  r\right)  }\exp\left(  2k\int_{0}^{T}\left\Vert \nabla f\left(  s,X_{s}^{\left(  r\right)  }\right) \right\Vert ^{2}\,\mathrm{d}s\right)  \right]  ^{1/2}\\
&=E^{Q}\left[  \exp\left( 2k\int_{0}^{T}\left\Vert \nabla f\left(  s,x^{\left(  r\right)  }
+W_{s}\right)  \right\Vert ^{2}\,\mathrm{d}s\right)  \right]  ^{1/2}.
\end{align*}
This is bounded by a constant depending only on the $L_{p}^{q}$ norm of
$\nabla f$, and on $k$, see corollary \ref{corollary appendix B} in the
Appendix. The proof is complete.

\section{Appendix}

We collect here known results, taken from the paper \cite{KR05} and previous
works, see for instance \cite{Port}, \cite{Ver}. They include weak existence
of a solution $X$ by Girsanov theorem, a formula for the density of the law of
the solution with respect to Wiener measure, weak uniqueness and the
exponential integrability of the process $\left\vert f(t,X_{t})\right\vert
^{2}$ when $f\in L_{p}^{q}\left(  T\right)  $ with $\frac{d}{p}+\frac{2}{q}<1$.

\begin{lemma}
\label{lemma f(W)} Given $p^{\prime},\,q^{\prime}\in\lbrack1,\infty]$ such
that
\begin{equation}
\frac{d}{p^{\prime}}+\frac{2}{q^{\prime}}<2 \label{pq<2}%
\end{equation}
there exist two positive constants $C$ and $\beta$ (it is $2\beta
=2-2/q^{\prime}-d/p^{\prime}$) with the following property: for every $f\in
L_{p^{\prime}}^{q^{\prime}}\left(  T\right)  $ and every $t>s$, $t,s\in\left[
0,T\right]  $,
\begin{equation}
\sup_{x\in\mathbb{R}^{d}}E\left[  \int_{s}^{t}f\left(  r,x+W_{r-s}\right)
\mathrm{d}r\right]  \leq C(t-s)^{\beta}\,\Vert f\Vert_{L_{p^{\prime}%
}^{q^{\prime}}\left(  T\right)  }. \label{stima E f 1}%
\end{equation}

\end{lemma}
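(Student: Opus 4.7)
The plan is to express the expectation as a convolution with the Gaussian heat kernel and then apply Hölder's inequality twice---first in the spatial variable, then in the time variable---tracking the scaling exponents carefully to extract the final power $\beta$.

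First I would write, for each $r > s$,
\[
E\left[f(r, x + W_{r-s})\right] = \int_{\mathbb{R}^d} f(r, x+y)\, g_{r-s}(y)\, \mathrm{d}y,
\]
where $g_\tau(y) := (2\pi\tau)^{-d/2} \exp(-|y|^2/(2\tau))$ is the density of $W_\tau$. Letting $p''$ denote the Hölder conjugate of $p'$ (so $1/p' + 1/p'' = 1$), an application of Hölder in $y$ gives
\[
\bigl|E[f(r, x+W_{r-s})]\bigr| \leq \|f(r, \cdot)\|_{L^{p'}(\mathbb{R}^d)}\, \|g_{r-s}\|_{L^{p''}(\mathbb{R}^d)},
\]
with the right-hand side independent of $x$, which takes care of the supremum at no cost. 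A direct Gaussian computation yields $\|g_\tau\|_{L^{p''}} = C_{p',d}\, \tau^{-d/(2p')}$, with the exponent coming from $(d/2)(1 - 1/p'') = d/(2p')$.

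Next, integrating in $r$ over $(s,t)$ and applying Hölder in time with conjugate exponent $q''$ of $q'$, I would arrive at
\[
\int_s^t \|f(r,\cdot)\|_{L^{p'}}\,(r-s)^{-d/(2p')}\,\mathrm{d}r \leq \|f\|_{L^{q'}_{p'}(T)} \left(\int_s^t (r-s)^{-dq''/(2p')}\,\mathrm{d}r\right)^{1/q''}.
\]
The inner integral converges if and only if $dq''/(2p') < 1$, and this condition rearranges to $d/p' + 2/q' < 2$, exactly the assumption (\ref{pq<2}). Under this assumption the integral equals a constant multiple of $(t-s)^{1 - dq''/(2p')}$, whose $1/q''$-th root is $(t-s)^{1/q'' - d/(2p')} = (t-s)^{1 - 1/q' - d/(2p')} = (t-s)^\beta$, with $2\beta = 2 - 2/q' - d/p'$, matching the announced exponent.

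There is essentially no conceptual obstacle; the proof reduces to careful exponent bookkeeping. The only mild care is required in the boundary cases $p' = \infty$ (where $\|g_\tau\|_{L^1} = 1$ and no temporal singularity is produced by the spatial Hölder step) and $q' = \infty$ (where the temporal Hölder step degenerates to a supremum in $r$), but in both situations the hypothesis (\ref{pq<2}) still delivers the correct $(t-s)^\beta$ factor, so the same argument goes through unchanged.
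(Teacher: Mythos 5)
Your proof is correct and follows essentially the same route as the paper's: rewrite the expectation as a convolution with the Gaussian kernel, apply H\"older in space to produce the factor $\Vert g_{r-s}\Vert_{L^{p''}}\sim (r-s)^{-d/(2p')}$, then H\"older in time to obtain $(t-s)^{1-1/q'-d/(2p')}=(t-s)^{\beta}$, with the integrability of the time singularity being exactly condition (\ref{pq<2}). The only (harmless) difference is that you also address the endpoint cases $p'=\infty$ and $q'=\infty$, which the paper explicitly omits.
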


\vspace{0cm}The proof is elementary (we write it only for $p^{\prime
},\,q^{\prime}\in\left(  1,\infty\right)  $): with $\frac{1}{p}+\frac
{1}{p^{\prime}}=1$, $\frac{1}{q}+\frac{1}{q^{\prime}}=1$, since%
\[
\int_{\mathbb{R}^{d}}\left(  2\pi(r-s)\right)  ^{-pd/2}\,e^{\frac{-p\left\vert
y\right\vert ^{2}}{2(r-s)}}\,\mathrm{d}y=C\left(  r-s\right)  ^{\left(
1-p\right)  d/2}%
\]
(we denote by $C$ a generic constant) and $\frac{q\left(  1-p\right)
d+2p}{2pq}=-\frac{d}{2p^{\prime}}+1-\frac{1}{q^{\prime}}$ we have
\begin{align*}
E\left[  \int_{s}^{t} \hspace{-0,1cm} f\left(  r,x+W_{r-s}\right)  \mathrm{d}r\right]   &
\leq\int_{s}^{t}\left(  \int_{\mathbb{R}^{d}} \hspace{-0,1cm}  f^{p^{\prime}}(r,y)\,\mathrm{d} y\right)  ^{1/p^{\prime}}\hspace{-0,1cm} \left(  \int_{\mathbb{R}^{d}} \hspace{-0,1cm} \left(  2\pi (r-s)\right)  ^{-pd/2}\,e^{\frac{-p\left\vert y\right\vert ^{2}}{2(r-s)} }\,\mathrm{d}y\right)  ^{1/p}\hspace{-0,2cm}\mathrm{d}r\\
&  \leq C\Vert f\Vert_{L_{p^{\prime}}^{q^{\prime}}\left(  T\right)  }\left(
\int_{s}^{t}\left(  r-s\right)  ^{q\left(  1-p\right)  d/2p}\mathrm{d} r\right)  ^{1/q}\\
&=C\Vert f\Vert_{L_{p^{\prime}}^{q^{\prime}}\left(  T\right)
}\left(  t-s\right)  ^{1-1/q^{\prime}-d/2p^{\prime}}.
\end{align*}

\begin{remark}
\label{oss pq-p'q'} As a consequence, if $f\in L_{p}^{q}\left(  T\right)  $
with $\frac{d}{p}+\frac{2}{q}<1$ (condition (\ref{pq})), then $f^{2}\in 
L_{p^{\prime}}^{q^{\prime}}\left(  T\right)  $  with $q^{\prime}=q/2$,
$p^{\prime}=p/2$ satisfying $\frac{d}{p^{\prime}}+\frac{2}{q^{\prime}}<2$, and
$\Vert f^{2}\Vert_{L_{p^{\prime}}^{q^{\prime}}\left(  T\right)  }\leq\Vert
f\Vert_{L_{p}^{q}\left(  T\right)  }^{2}$. Therefore
\[
\sup_{x\in\mathbb{R}^{d}}E\left[  \int_{s}^{t}f^{2}\left(  r,x+W_{r-s}\right)
\mathrm{d}r\right]  \leq C(t-s)^{\beta}\,\Vert f\Vert_{L_{p}^{q}\left(
T\right)  }^{2}.
\]

\end{remark}

\begin{lemma}
[Khas'minskii]\label{lemma Khas} Let $f:\mathbb{R}^{d}\rightarrow\mathbb{R}$
be a positive Borel function such that
\begin{equation}
\alpha:=\sup_{x\in\mathbb{R}^{d}}E\left[  \int_{0}^{T}f(s,x+W_{s}%
)\,\mathrm{d}s\right]  <1. \label{ipotesi Khas}%
\end{equation}
Then
\[
\sup_{x\in\mathbb{R}^{d}}E\left[  e^{\int_{0}^{T}f(s,x+W_{s})\,\mathrm{d}%
s}\right]  \leq\frac{1}{1-\alpha}.
\]

\end{lemma}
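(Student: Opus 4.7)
The plan is to bound $E[e^I]$ with $I := \int_0^T f(s, x+W_s)\,\mathrm{d}s$ by expanding the exponential in its Taylor series and showing $E[I^n] \le n!\,\alpha^n$ by induction on $n$ using the Markov property of $W$; summing the resulting geometric series $\sum_{n \ge 0} \alpha^n$ then gives the stated bound $1/(1-\alpha)$.

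First, by symmetry of the integrand in $(s_1,\ldots,s_n)$, I would rewrite
\[
I^n = n! \int_{0 \le s_1 \le \cdots \le s_n \le T} \prod_{k=1}^n f(s_k, x+W_{s_k})\,\mathrm{d}s_1 \cdots \mathrm{d}s_n.
\]
Conditioning on $\mathcal{F}_{s_{n-1}}$ and using the independence of the increment $W_{s_n} - W_{s_{n-1}}$, the Markov property yields
\[
E\left[\int_{s_{n-1}}^T f(s_n, x+W_{s_n})\,\mathrm{d}s_n \,\big|\, \mathcal{F}_{s_{n-1}}\right] = h(s_{n-1}, x+W_{s_{n-1}}),
\]
where $h(t, y) := E\left[\int_t^T f(s, y + \widetilde W_{s-t})\,\mathrm{d}s\right]$ for an independent Brownian motion $\widetilde W$ starting at $0$.

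The key step is the uniform bound $h(t, y) \le \alpha$. Since $f \ge 0$, the change of variable $u = s-t$ gives $h(t, y) = \int_0^{T-t} E[f(t+u, y+\widetilde W_u)]\,\mathrm{d}u$, and by positivity this is dominated by the corresponding integral over $[0,T]$ of the time-shifted potential, which the hypothesis bounds by $\alpha$. Plugging back and iterating, one peels off one factor of $\alpha$ per conditioning, obtaining by induction on $n$ that $E[I^n] \le n!\,\alpha^n$. Therefore
\[
E[e^I] = \sum_{n=0}^\infty \frac{E[I^n]}{n!} \le \sum_{n=0}^\infty \alpha^n = \frac{1}{1-\alpha},
\]
and taking $\sup_x$ concludes.

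The main obstacle is verifying the uniform bound $h(t, y) \le \alpha$. When $f$ is independent of time (as the domain in the statement literally suggests), this is immediate from positivity and the inclusion $[0, T-t] \subset [0, T]$; in the time-dependent interpretation one invokes the standard Khas'minskii convention of extending $f$ by zero outside $[0,T]$ and applying the hypothesis to the time-shifted potential. Once this bound is in hand, the remaining combinatorics and the summation of the Taylor series are routine.
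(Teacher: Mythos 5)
The paper does not actually prove this lemma; it only cites Khas'minskii's original article and \cite[Chapter 1, lemma 2.1]{Sz98}, so there is no in-paper argument to compare against. Your proof is the standard one from those references --- expand $e^{I}$, write $E[I^{n}]$ as $n!$ times an integral over the ordered simplex, peel off the last time variable by conditioning on $\mathcal{F}_{s_{n-1}}$ and the Markov property, conclude $E[I^{n}]\le n!\,\alpha^{n}$ by induction, and sum the geometric series --- and it is correct as it stands for the statement as literally written, where $f$ is a function on $\mathbb{R}^{d}$ alone: there the bound $h(t,y)\le\alpha$ is immediate from $f\ge0$ and $[0,T-t]\subset[0,T]$, exactly as you say. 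You have also correctly isolated the one delicate point in the time-dependent reading, though your proposed fix is a little loose: extending $f$ by zero outside $[0,T]$ does not by itself give $h(t,y)\le\alpha$, because $h(t,y)=E\bigl[\int_{0}^{T-t}f(t+u,y+W_{u})\,\mathrm{d}u\bigr]$ evaluates $f(t+u,\cdot)$ against a Gaussian of variance $u$ rather than $t+u$, and hypothesis (\ref{ipotesi Khas}) taken at face value controls only the case $t=0$. The correct formulation strengthens the hypothesis to a supremum over starting times $t$ as well as over $y$, i.e. $\sup_{t,y}E\bigl[\int_{t}^{T}f(s,y+W_{s-t})\,\mathrm{d}s\bigr]\le\alpha$, and this is precisely what lemma \ref{lemma f(W)} supplies in the only place the present paper invokes Khas'minskii's lemma (corollary \ref{corollary appendix B}). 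With that reading of the hypothesis, your induction and the monotone-convergence summation are sound.
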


\vspace{0cm}See \cite{Kha59} or \cite[Chapter 1, lemma 2.1]{Sz98}.

\begin{corollary}
\label{corollary appendix B}If $f$ is a vector field of class $L_{p}%
^{q}\left(  T\right)  $ for some $p,\,q\in\lbrack1,\infty]$ such that
(\ref{pq}) holds, then there exists a constant $K_{f}$ depending on $\Vert
f\Vert_{L_{p}^{q}\left(  T\right)  }$ such that
\[
\sup_{x\in\mathbb{R}^{d}}E\left[  e^{\int_{0}^{T}\left\vert f(s,x+W_{s}%
)\,\right\vert ^{2}\mathrm{d}s}\right]  \leq K_{f}\,.
\]
Moreover, all (positive and negative) moments of
\begin{equation}
\rho_{T}:=\exp\left(  \int_{0}^{T}f(s,x+W_{s})\cdot\mathrm{d}W_{s}-\frac{1}%
{2}\int_{0}^{T}\left\vert f(s,x+W_{s})\right\vert ^{2}\mathrm{d}s\right)
\label{rho di lemma rho}%
\end{equation}
are finite.
\end{corollary}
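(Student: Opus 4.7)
The plan is to establish the two claims in sequence: first the exponential moment bound on $\int_0^T |f(s,x+W_s)|^2\,\mathrm{d}s$, which then powers the Novikov-type argument for the moments of $\rho_T$.

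For the exponential estimate, the remark following Lemma~\ref{lemma f(W)} shows that under $\frac{d}{p}+\frac{2}{q}<1$,
$$\sup_{x\in\mathbb{R}^d}E\left[\int_s^t|f(r,x+W_{r-s})|^2\,\mathrm{d}r\right]\le C(t-s)^\beta\,\|f\|_{L_p^q(T)}^2.$$
Khas'minskii's Lemma~\ref{lemma Khas} requires this supremum to be strictly less than one, so I would partition $[0,T]$ into $N$ intervals $[t_i,t_{i+1}]$ of equal length $T/N$, with $N$ chosen so that $C(T/N)^\beta\|f\|_{L_p^q(T)}^2\le 1/2$. Conditioning successively on $\mathcal{F}_{t_{N-1}},\dots,\mathcal{F}_{t_1}$ and using the Markov property of $W$, each conditional expectation $E\bigl[\exp(\int_{t_i}^{t_{i+1}}|f(s,x+W_s)|^2\,\mathrm{d}s)\,\big|\,\mathcal{F}_{t_i}\bigr]$ reduces, by a shift, to a Khas'minskii estimate for a Brownian motion starting at the (random) point $x+W_{t_i}$; because the bound $C(T/N)^\beta\|f\|_{L_p^q(T)}^2\le 1/2$ is uniform in the starting point, each conditional factor is at most $2$. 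Multiplying across the partition yields $\sup_x E[\exp(\int_0^T|f(s,x+W_s)|^2\,\mathrm{d}s)]\le 2^N=:K_f$.

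For the moments of $\rho_T$, fix $k\in\mathbb{R}$ and write, with $\mathcal{E}(\cdot)_T$ denoting the Dol\'eans exponential,
$$\rho_T^k=\mathcal{E}(2kf\cdot W)_T^{1/2}\,\exp\!\left(\tfrac{k(2k-1)}{2}\int_0^T|f(s,x+W_s)|^2\,\mathrm{d}s\right).$$
By Cauchy--Schwarz,
$$E[\rho_T^k]\le E[\mathcal{E}(2kf\cdot W)_T]^{1/2}\,E\!\left[\exp\!\left(k(2k-1)\int_0^T|f(s,x+W_s)|^2\,\mathrm{d}s\right)\right]^{1/2}.$$
The first factor equals one: Novikov's condition for $2kf$ amounts to $E[\exp(2k^2\int_0^T|f|^2\,\mathrm{d}s)]<\infty$, which follows from the first claim applied to $\sqrt{2}\,|k|\,f\in L_p^q(T)$. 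The second factor is finite by the first claim applied to $\sqrt{|k(2k-1)|}\,f\in L_p^q(T)$. Hence every real moment of $\rho_T$ is finite.

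The delicate point is the partitioning step in the first part: Khas'minskii's lemma as stated only provides a bound under a strict smallness assumption on the mean integral, and only the \emph{uniformity in $x$} of the short-time estimate from Lemma~\ref{lemma f(W)} legitimizes iterating the local bound via the Markov property to produce a universal constant $K_f$. Once the exponential estimate is in hand, the moments of $\rho_T$ drop out from a routine H\"older/Cauchy--Schwarz split.
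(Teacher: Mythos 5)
Your argument is correct, and the second half (moments of $\rho_T$) is essentially identical to the paper's: the same Cauchy--Schwarz split of $\rho_T^k$ into the square root of the Dol\'eans exponential of $2kf\cdot W$ times an exponential of $k(2k-1)\int_0^T|f|^2$, with Novikov verified through the first claim. Where you genuinely diverge is in the proof of the exponential estimate itself. The paper applies Khas'minskii's lemma \emph{once on the whole interval} $[0,T]$: it exploits the strictness of $\frac{d}{p}+\frac{2}{q}<1$ to find $\delta>0$ with $|f|^{2+\delta}$ still in an admissible class $L_{p'}^{q'}(T)$ satisfying (\ref{pq<2}), scales by a small $\varepsilon$ so that the Khas'minskii hypothesis (\ref{ipotesi Khas}) holds for $\varepsilon|f|^{2+\delta}$, and then recovers the exponential moment of $f^2$ via Young's inequality $f^2\le\varepsilon|f|^{2+\delta}+C_{\varepsilon,\delta}$. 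You instead keep the exponent $2$ fixed and make the Khas'minskii constant small by shrinking the time interval, then iterate over a partition using the Markov property and the uniformity in the starting point of (\ref{stima E f 1}); this is the classical bootstrap of Khas'minskii's lemma and it is legitimate here precisely because the short-time bound $C(t-s)^{\beta}\Vert f\Vert_{L_p^q(T)}^2$ with $\beta>0$ is uniform in $x$, as you note. Both routes ultimately use the strict inequality (yours through $\beta>0$, the paper's through the room to pass to $|f|^{2+\delta}$), and both yield a constant depending only on $\Vert f\Vert_{L_p^q(T)}$; your version requires the (routine but unstated in the paper's Lemma \ref{lemma Khas}) subinterval form of Khas'minskii's lemma and the conditioning step, while the paper's version stays strictly within the lemma as literally quoted, at the price of the $|f|^{2+\delta}$ detour.
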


\begin{proof}
Since $f\in L_{p}^{q}\left(  T\right)  $ with $p,\,q$ satisfying (\ref{pq}),
$f^{2}\in L_{p^{\prime}}^{q^{\prime}}\left(  T\right)  $ with $p^{\prime}%
=p/2$, $q^{\prime}=q/2$ satisfying (\ref{pq<2}). Since (\ref{pq<2}) is a
strict inequality, we may choose $\delta>0$ such that $\left\vert f\right\vert
^{2+\delta}\in L_{p^{\prime}}^{q^{\prime}}\left(  T\right)  $ for some new
$p^{\prime},\,q^{\prime}$ satisfying (\ref{pq<2}). Then we have inequality
(\ref{stima E f 1}) with $f$ replaced by $\left\vert f\right\vert ^{2+\delta}%
$. Choose $\varepsilon>0$ such that%
\[
\sup_{x\in\mathbb{R}^{d}}E\left[  \int_{0}^{T}\varepsilon\left\vert
f\right\vert ^{2+\delta}(s,x+W_{s})\,\mathrm{d}s\right]  <1.
\]
Then, by Khas'minskii lemma,
\[
\sup_{x\in\mathbb{R}^{d}}E\left[  e^{\int_{0}^{T}\varepsilon\left\vert
f\right\vert ^{2+\delta}(s,x+W_{s})\,\mathrm{d}s}\right]  <\infty.
\]
From Young inequality, there exists a constant $C_{\varepsilon,\delta}>0$ such
that $f^{2}\leq\varepsilon\left\vert f\right\vert ^{2+\delta}+C_{\varepsilon
,\delta}$. Then%
\[
\sup_{x\in\mathbb{R}^{d}}E\left[  e^{\int_{0}^{T}f^{2}(s,x+W_{s})\mathrm{d}%
s}\right]  \leq\sup_{x\in\mathbb{R}^{d}}E\left[  e^{\int_{0}^{T}%
\varepsilon\left\vert f\right\vert ^{2+\delta}(s,x+W_{s})\mathrm{d}s}\right]
e^{C_{\varepsilon,\delta}}<\infty.
\]
By inspection into the previous inequalities, we see that this bound depends
only on $\Vert f\Vert_{L_{p}^{q}\left(  T\right)  }$.

For the last claim, notice that, by Novikov condition, the process \linebreak $\rho
_{t}=\exp\left(  \int_{0}^{t}f(s,x+W_{s})\,\cdot\mathrm{d}W_{s}-\frac{1}%
{2}\int_{0}^{t}\left\vert f(s,x+W_{s})\right\vert ^{2}\mathrm{d}s\right)  $ is
an exponential martingale, in particular with $E\left[  \rho_{T}\right]  =1$.
Take any $\alpha>0$ and set $\overline{f}=2\alpha f$. This is again an element
of $L_{p}^{q}\left(  T\right)  $. Then we can define the corresponding
exponential martingale $\bar{\rho}$ with $\bar{b}$ in place of $b$, with
$E\left[  \bar{\rho}_{T}\right]  =1$. Then, for $\beta$ such that
$\sqrt{2\alpha\beta}=2\alpha$,
\begin{align*}
E\left[  \rho_{T}^{\alpha}\right]   &  =E\left[  e^{\int_{0}^{T}\alpha
f(s,x+W_{s})\cdot\mathrm{d}W_{s}-\frac{\alpha\beta}{2}\int_{0}^{T}\left\vert
f(s,x+W_{s})\right\vert ^{2}\mathrm{d}s}e^{\frac{\alpha\left(  \beta-1\right)
}{2}\int_{0}^{T}\left\vert f(s,x+W_{s})\right\vert ^{2}\mathrm{d}s}\right] \\
&  \leq E\left[  e^{\int_{0}^{T}2\alpha f(s,x+W_{s})\cdot\mathrm{d}W_{s}%
-\frac{1}{2}\int_{0}^{T}\left\vert \sqrt{2\alpha\beta}f(s,x+W_{s})\right\vert
^{2}\mathrm{d}s}\right]  ^{1/2}E\left[  e^{\alpha\left(  \beta-1\right)
\int_{0}^{T}\left\vert f(s,x+W_{s})\right\vert ^{2}\mathrm{d}s}\right]  ^{1/2}%
\end{align*}
which is finite since the first factor is $E\left[  \bar{\rho}_{T}\right]
^{1/2}=1$ and the second is finite by the first claim of the corollary applied
to $\sqrt{\left\vert \alpha\left(  \beta-1\right)  \right\vert }f\in L_{p}%
^{q}\left(  T\right)  $. \vspace{0cm}For $\alpha<0$ the computations are
similar. The proof is complete.
\end{proof}

By a classical application of Girsanov theorem (see \cite[lemma 3.2]{KR05} for
details) we have:

\begin{proposition}
\label{teo weak existence} Given $b\in L_{p}^{q}\left(  T\right)  $ with
$p,\,q$ satisfying (\ref{pq}) and $x\in\mathbb{R}^{d}$, there exist processes
$X_{t}$, $W_{t}$ defined for $t\in\lbrack0,T]$ on a filtered space
$(\Omega,F,F_{t},P)$ such that $W_{t}$ is a $d$--dimensional $\{F_{t}%
\}$--Wiener process and $X_{t}$ is an $\{F_{t}\}$--adapted,
continuous, $d$--dimensional process for which
\begin{equation}
P\left(  \int_{0}^{T}\left\vert b(t,X_{t})\right\vert ^{2}\ \mathrm{d}%
t<\infty\right)  =1 \label{condizione integ b}%
\end{equation}
and almost surely, for all $t\in\lbrack0,T]$%
\[
X_{t}=x+\int_{0}^{t}b(s,X_{s})\,\mathrm{d}s+W_{t}.
\]

\end{proposition}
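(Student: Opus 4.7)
The plan is to construct a weak solution by the classical Girsanov change of measure, starting from a Brownian motion plus drift-free dynamics and transforming the reference measure so that the drift appears.

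First, I would place a $d$-dimensional Brownian motion $\tilde{W}_t$ on a filtered probability space $(\Omega, F, F_t, \tilde{P})$ (e.g., the canonical Wiener space with the augmented Brownian filtration) and simply set $X_t := x + \tilde{W}_t$, so that under $\tilde{P}$ the process $X$ is a Brownian motion started at $x$. The next step is to check the exponential integrability that makes Girsanov's theorem applicable. Since $b \in L_p^q(T)$ with $\frac{d}{p} + \frac{2}{q} < 1$, corollary \ref{corollary appendix B} applied to $f = b$ yields
\[
\tilde{E}\left[\exp\left(\int_0^T |b(s, x + \tilde{W}_s)|^2 \mathrm{d}s\right)\right] \leq K_b < \infty,
\]
which in particular establishes Novikov's condition for $b(\cdot, X_\cdot)$ and, as a free byproduct, shows that $\tilde{P}\big(\int_0^T |b(s, X_s)|^2 \mathrm{d}s < \infty\big) = 1$.

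Second, I would define the candidate density
\[
\rho_T := \exp\left(\int_0^T b(s, X_s) \cdot \mathrm{d}\tilde{W}_s - \tfrac{1}{2}\int_0^T |b(s, X_s)|^2 \mathrm{d}s\right),
\]
which by the previous step is a true exponential martingale with $\tilde{E}[\rho_T] = 1$, and set $\mathrm{d}P/\mathrm{d}\tilde{P}|_{F_T} := \rho_T$. Girsanov's theorem then guarantees that the process $W_t := \tilde{W}_t - \int_0^t b(s, X_s)\,\mathrm{d}s$ is an $\{F_t\}$-Brownian motion under $P$. Rearranging this identity gives precisely
\[
X_t = x + \tilde{W}_t = x + \int_0^t b(s, X_s)\,\mathrm{d}s + W_t,
\]
which is the required equation, and the $F_t$-adaptedness and continuity of $X$ are inherited from the corresponding properties under $\tilde{P}$. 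Finally, because $P$ and $\tilde{P}$ are equivalent on $F_T$, the almost-sure integrability condition $\int_0^T |b(s, X_s)|^2 \mathrm{d}s < \infty$ transfers from $\tilde{P}$ to $P$, establishing (\ref{condizione integ b}).

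There is no real obstacle here: the only non-trivial analytic ingredient is the exponential estimate of corollary \ref{corollary appendix B}, which rests on Khas'minskii's lemma together with the Prodi--Serrin condition (\ref{pq}) and which has already been established in the appendix. Everything else is a mechanical application of Girsanov's theorem.
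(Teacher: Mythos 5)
Your proposal is correct and is exactly the argument the paper has in mind: the paper does not write out a proof but introduces the proposition with ``by a classical application of Girsanov theorem (see [KR05, Lemma 3.2])'', and your construction --- Brownian motion under a reference measure, Novikov's condition via the exponential estimate of Corollary \ref{corollary appendix B} (whose first claim involves only Wiener measure, so there is no circularity), change of measure, and rearrangement --- is precisely that classical application. Nothing is missing.
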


When both a solution $X$ of equation (\ref{SDE}) and the Brownian motion
itself satisfy condition (\ref{condizione integ b}), we may apply a result of
absolutely continuous change of measures, see Liptser--Shiryaev \cite[theorems
7.7 and 7.9]{LS}. We know that Brownian motion satisfies this condition, when
$b\in L_{p}^{q}\left(  T\right)  $, by remark \ref{oss pq-p'q'}. We have to
impose by assumption the condition (\ref{condizione integ b}) on solutions.

\begin{corollary}
\label{prop Girsanov formulas} Take $b\in L_{p}^{q}\left(  T\right)  $ for
$p,q$ such that (\ref{pq}) holds. Let $(X,W)$ be a (weak) solution of equation
(\ref{SDE}) in the sense of theorem \ref{teo weak existence}, in particular
with $X$ satisfying condition (\ref{condizione integ b}). Then, for any non
negative Borel function $\Phi$ defined on the space $C\left(  [0,T];\mathbb{R}%
^{d}\right)  $ we have
\begin{equation}
E\left[  \Phi(X)\right]  =E\left[  \Phi(x+W)\ e^{\,\int_{0}^{T}b(s,x+W_{s}%
)\cdot\mathrm{d}W_{s}-1/2\int_{0}^{T}\left\vert b(s,x+W_{s})\right\vert
^{2}\mathrm{d}s}\right]  . \label{formula Gir 2}%
\end{equation}
In particular, weak uniqueness holds for the equation (\ref{SDE}), in the
class of solutions satisfying (\ref{condizione integ b}). Moreover, if $f\in
L_{\tilde{p}}^{\tilde{q}}(T)$ where $\tilde{p},\tilde{q}$ are such that
$d/\tilde{p}+2/\tilde{q}<1$, then, for any $k\in\mathbb{R}$ there exists a
constant $C_{f}$ depending on $\Vert f\Vert_{L_{\tilde{p}}^{\tilde{q}}(T)}$
such that
\begin{equation}
E\left[  e^{k\int_{0}^{T}\left\vert f(t,X_{t})\right\vert ^{2}\,\mathrm{d}%
t}\right]  \leq C_{f}. \label{Novikov b(X)}%
\end{equation}

\end{corollary}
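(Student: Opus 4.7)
The strategy is to establish (\ref{formula Gir 2}) first; weak uniqueness and the exponential moment bound then follow easily as corollaries.

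For (\ref{formula Gir 2}), I plan to apply the absolutely continuous change of measure result of Liptser--Shiryaev cited in the paragraph preceding the statement. That result requires condition (\ref{condizione integ b}) to hold both for $X$ under $P$ and for the Brownian motion $x+W$ under $P$. The former is part of the hypothesis. The latter follows from Remark \ref{oss pq-p'q'}: since $b\in L_p^q(T)$ with $d/p+2/q<1$, one has $E\bigl[\int_0^T |b(s,x+W_s)|^2\,\mathrm{d}s\bigr]\leq C\,\Vert b\Vert_{L_p^q(T)}^2<\infty$, so $\int_0^T|b(s,x+W_s)|^2\,\mathrm{d}s<\infty$ almost surely. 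With both conditions verified, Liptser--Shiryaev gives the mutual absolute continuity of the laws of $X$ and $x+W$ together with the explicit density appearing in (\ref{formula Gir 2}).

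Weak uniqueness is then immediate: the right-hand side of (\ref{formula Gir 2}) depends only on $b$, $x$ and the Wiener measure, so the law of any weak solution satisfying (\ref{condizione integ b}) is uniquely determined. For the exponential estimate, apply (\ref{formula Gir 2}) with $\Phi(X)=\exp\bigl(k\int_0^T|f(t,X_t)|^2\,\mathrm{d}t\bigr)$, which is a nonnegative Borel function on $C([0,T];\mathbb{R}^d)$. This rewrites $E\bigl[\exp\bigl(k\int_0^T|f(t,X_t)|^2\,\mathrm{d}t\bigr)\bigr]$ as $E\bigl[\exp\bigl(k\int_0^T|f(t,x+W_t)|^2\,\mathrm{d}t\bigr)\,\rho_T\bigr]$. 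By Cauchy--Schwarz the latter is bounded by the product of $E\bigl[\exp\bigl(2k\int_0^T|f(t,x+W_t)|^2\,\mathrm{d}t\bigr)\bigr]^{1/2}$ and $E[\rho_T^2]^{1/2}$. The first factor is finite by Corollary \ref{corollary appendix B} applied to $\sqrt{|2k|}\,f\in L_{\tilde p}^{\tilde q}(T)$ (whose indices satisfy the strict inequality analogous to (\ref{pq})). For the second factor, the last claim of Corollary \ref{corollary appendix B} gives finiteness of all moments of $\rho_T$, since $b\in L_p^q(T)$ satisfies (\ref{pq}). The resulting bound depends only on $\Vert f\Vert_{L_{\tilde p}^{\tilde q}(T)}$ and on $\Vert b\Vert_{L_p^q(T)}$.

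The main subtlety lies at the Girsanov step: one needs the two-sided integrability condition on both $X$ and $x+W$ rather than Novikov's condition, which is precisely why the statement is phrased in terms of (\ref{condizione integ b}) for $X$. The rest consists of assembling previously proved tools. For negative $k$ in the exponential bound, the Cauchy--Schwarz argument goes through unchanged, since the corollary provides finite moments of $\rho_T$ of any order.
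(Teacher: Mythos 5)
Your proof is correct and follows essentially the same route as the paper: the density formula (\ref{formula Gir 2}) via the Liptser--Shiryaev change-of-measure theorems, with condition (\ref{condizione integ b}) verified for the Brownian motion through Remark \ref{oss pq-p'q'}, and then the exponential bound by applying the identity to $\Phi=\exp\bigl(k\int_0^T|f|^2\bigr)$ together with the estimates of Corollary \ref{corollary appendix B}. Your Cauchy--Schwarz separation of $\rho_T$ from the $f$-exponential is just a slightly more packaged form of the paper's instruction to ``repeat the estimates made above to prove that $E[\rho_T^{\alpha}]$ was finite.''
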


\vspace{0cm}The first part of the corollary depends on the above mentioned
results of \cite[theorems 7.7 and 7.9]{LS}. To prove the exponential
integrability of $\left\vert f(t,X_{t})\right\vert ^{2}$, notice that by
(\ref{formula Gir 2}) we have%
\[
E\left[  e^{k\int_{0}^{T}\left\vert f(t,X_{t})\right\vert ^{2}\mathrm{d}%
t}\right]  =E\left[  \ e^{\,\int_{0}^{T}b(s,x+W_{s})\cdot\mathrm{d}%
W_{s}-1/2\int_{0}^{T}\left\vert b(s,x+W_{s})\right\vert ^{2}\mathrm{d}%
s+k\int_{0}^{T}\left\vert f(t,x+W_{t})\right\vert ^{2}\mathrm{d}t}\right]
\]
and thus it is sufficient to repeat the estimates made above to prove that
$E\left[  \rho_{T}^{\alpha}\right]  $ was finite.

With the same proof, namely%
\[
E\left[  \left\vert X_{t}\right\vert ^{p}\right]  =E\left[  \ \left\vert
x+W_{t}\right\vert ^{p}e^{\,\int_{0}^{T}b(s,x+W_{s})\cdot\mathrm{d}%
W_{s}-1/2\int_{0}^{T}\left\vert b(s,x+W_{s})\right\vert ^{2}\mathrm{d}%
s}\right]
\]
followed by H\"{o}lder inequality as in the proof made above to prove that
$E\left[  \rho_{T}^{\alpha}\right]  $ was finite, we also have:

\begin{proposition}
\label{proposition powers}Let $(X,W)$ be a (weak) solution of equation
(\ref{SDE}). Then
\[
\sup_{t\in\left[  0,T\right]  }E\left[  \left\vert X_{t}\right\vert
^{p}\right]  <\infty
\]
for every $p\geq1$.
\end{proposition}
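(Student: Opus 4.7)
The plan is to apply Corollary \ref{prop Girsanov formulas} directly: since for each fixed $t\in[0,T]$ the map $\omega\mapsto|\omega(t)|^p$ is a non-negative Borel functional on $C([0,T];\mathbb{R}^d)$, formula (\ref{formula Gir 2}) gives
\[
E\!\left[|X_t|^p\right]=E\!\left[|x+W_t|^p\;e^{\int_0^T b(s,x+W_s)\cdot\mathrm{d}W_s-\frac12\int_0^T|b(s,x+W_s)|^2\mathrm{d}s}\right]=E\!\left[|x+W_t|^p\rho_T\right].
\]
This replaces the intractable law of $X$ by the Wiener measure weighted by the Girsanov density $\rho_T$ defined in (\ref{rho di lemma rho}).

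Next I would use H\"older's inequality to split this expectation. With conjugate exponents $2,2$,
\[
E\!\left[|x+W_t|^p\rho_T\right]\leq E\!\left[|x+W_t|^{2p}\right]^{1/2}\,E\!\left[\rho_T^{2}\right]^{1/2}.
\]
The first factor is a standard Gaussian moment, which is clearly bounded uniformly in $t\in[0,T]$ by some constant depending only on $|x|$, $T$ and $p$. The second factor is finite, and moreover bounded by a constant depending only on $\|b\|_{L_p^q(T)}$, by the last statement of Corollary \ref{corollary appendix B}, which asserts that all positive and negative moments of $\rho_T$ are finite under the hypothesis $b\in L_p^q(T)$ with (\ref{pq}).

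Combining the two bounds, the right-hand side is finite and independent of $t$, so taking the supremum yields
\[
\sup_{t\in[0,T]}E\!\left[|X_t|^p\right]<\infty,
\]
as claimed. There is no real obstacle: the non-trivial work is entirely contained in Corollary \ref{corollary appendix B}, which in turn rests on Lemma \ref{lemma f(W)} and Khas'minskii's lemma; one simply has to verify that $|\cdot|^p$ is indeed a Borel functional on path space so that the Girsanov formula (\ref{formula Gir 2}) applies, which is immediate from the continuity of the evaluation map $\omega\mapsto\omega(t)$.
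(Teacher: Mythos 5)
Your proposal is correct and follows essentially the same route as the paper: the authors also write $E\left[\left\vert X_{t}\right\vert ^{p}\right]=E\left[\left\vert x+W_{t}\right\vert ^{p}\rho_{T}\right]$ via the Girsanov formula (\ref{formula Gir 2}) and then apply H\"older's inequality together with the finiteness of the moments of $\rho_{T}$ from Corollary \ref{corollary appendix B}. Your version merely spells out the choice of conjugate exponents and the uniformity in $t$, which the paper leaves implicit.
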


\subsection*{Acknowledgements}

Part of the work was done at the Newton institute for Mathematical Sciences in Cambridge (UK), whose support is gratefully acknowledged, during the program "Stochastic partial differential equations".

\vspace{1cm}


\begin{thebibliography}{99999}                                                                                            %


\bibitem[DD03]{DD03}G. Da Prato, A. Debussche, Ergodicity for the 3D
stochastic Navier-Stokes equations, \textit{J. Math. Pures Appl.} (9)
\textbf{82} (2003), no. 8, 877--947.
\bibitem[DF10]{DaPratoFlandoli}G. Da Prato, F. Flandoli, Pathwise uniqueness
for a class of SDE in Hilbert spaces and applications, \textit{J. Funct.
Anal.} (2010), doi:10.1016/j.jfa.2009.11.019.
\bibitem[Fe09]{Fedrizzi}E. Fedrizzi, Uniqueness and flow theorems for solutions of SDEs with low regularity of the drift, tesi di Laurea in Matematica, Universit\`a di Pisa (2009).
\bibitem[FGP10]{FGP}F. Flandoli, M. Gubinelli, E. Priola, Well--\thinspace
posedness of the transport equation by stochastic perturbation,
\textit{Invent. Math.} (2010), DOI 10.1007/s00222-009-0224-4.
\bibitem[GK96]{Gyongy-Krylov}I. Gy\"{o}ngy, N. Krylov, Existence of strong
solutions for It\^{o}'s stochastic equations via approximations,
\textit{Probab. Theory Relat. Fields} \textbf{105} (2) (1996) 143--158.
\bibitem[Kh59]{Kha59}R.Z. Khas'minskii, On positive solutions of the
equation\emph{ $Au+Vu=0$}, \textit{Theor. Probab. Appl.} \textbf{4} (1959),
309-318.
\bibitem[Kr01]{Kr01a}N.V. Krylov, The heat equation in $L_{q}((0,T),L_{p}%
)$--\thinspace spaces with weights, \textit{Siam J. Math. Anal.} \textbf{32}
(5) (2001), 1117-1141.
\bibitem[KR05]{KR05}N.V. Krylov, M. R\"{o}ckner, Strong solutions to
stochastic equations with singular time dependent drift, \textit{Probab.
Theory Relat. Fields} \textbf{131 }(2005), 154-196.
\bibitem[LS77]{LS}R.S. Liptser, A.N. Shiryaev, \emph{Statistics of random
processes}. \textquotedblleft Nauka\textquotedblright, Moscow 1974 in Russian;
English translation: Springer--Verlag, Berlin 1977.
\bibitem[Po82]{Port}N.I. Portenko, \textit{Generalized Diffusion Processes},
Nauka, Moscow, 1982 In Russian; English translation: Amer. Math. Soc.,
Providence, Rhode Island, 1990.
\bibitem[Sc97]{Schmal}B. Schmalfuss, Qualitative properties for the stochastic
Navier-Stokes equation, \textit{Nonlinear Anal}. \textbf{28} (9) (1997), 1545-1563.
\bibitem[Sz98]{Sz98}Alain-Sol Sznitman. \emph{Brownian motion, obstacles, and
random media}, Springer, Berlin 1998.
\bibitem[Ve80]{Ver}A.Ju. Veretennikov, Strong solutions and explicit formulas
for solutions of stochastic integral equations, \textit{Mat. Sb.} (N.S.)
111(153) (3) (1980) 434--452.
\bibitem[Zh05]{Zhang}X. Zhang, Strong solutions of SDES with singular drift
and Sobolev diffusion coefficients, \textit{Stochastic Process. Appl.}
\textbf{115} (2005), no. 11, 1805--1818.
\bibitem[Zv74]{Zv74}A.K. Zvonkin, A transformation of the phase space of a
diffusion process that removes the drif, \textit{Mat. Sb.} (N.S.) 93 (135)
(1974) 129--149.

\end{thebibliography}
\end{document}